\documentclass[12 pt]{article}
\usepackage{fullpage}


\usepackage{lipsum}
\usepackage{amsfonts}
\usepackage{graphicx}
\usepackage{epstopdf}
\usepackage{standalone}
\usepackage{algcompatible}

\ifpdf
  \DeclareGraphicsExtensions{.eps,.pdf,.png,.jpg}
\else
  \DeclareGraphicsExtensions{.eps}
\fi

\usepackage{amsopn}

\usepackage{epsfig} 
\usepackage{amsmath} 
\usepackage{amssymb}  
\usepackage{bm}
\DeclareMathAlphabet{\mathcal}{OMS}{cmsy}{m}{n}
\usepackage[english]{babel}
\usepackage{url}
\hyphenation{op-tical net-works semi-conduc-tor}
\usepackage{algorithm}
\usepackage{color}
\usepackage{pgfplots}
\usepackage{siunitx}
\usepackage{tikz}
\usepackage{tkz-euclide}
\usepackage{chngcntr}
\usepackage{verbatim}
\usepackage{enumerate}

\definecolor{ao(english)}{rgb}{0.0, 0.5, 0.0}
\usepackage[colorlinks, citecolor = {ao(english)}, linkcolor = {ao(english)}]{hyperref} 
\usepackage{cleveref}
\usepackage{aliascnt}
\usetikzlibrary{calc}
\usepackage{subcaption}
\usepackage{tikz}
\usepackage{pgfplots}
\usetikzlibrary{arrows,shapes,trees,calc,positioning,patterns,decorations.pathmorphing,decorations.markings}
\usetikzlibrary{matrix}
\usepgfplotslibrary{groupplots}
\pgfplotsset{compat=newest}
\usepackage{amsthm}
\usepgfplotslibrary{patchplots}


\crefname{figure}{Fig.}{Fig.}

\newtheorem{thm}{Theorem}
\crefname{thm}{Theorem}{Theorems}


\newtheorem{prop}{Proposition}
\crefname{prop}{Proposition}{Propositions}
\newtheorem{lem}{Lemma}
\crefname{lem}{Lemma}{Lemmas}
\newtheorem{cor}{Corollary}
\crefname{cor}{Corollary}{Corollaries}
\theoremstyle{remark}
\newtheorem{rem}{Remark}
\crefname{rem}{Remark}{Remarks}

\theoremstyle{definition}

\crefname{example}{Example}{Examples}

\crefname{ass}{Assumption}{Assumption}
\usepackage{dsfont}
\let\mathbb=\mathds
\usepackage[numbers,sort&compress]{natbib}

\crefname{conj}{Conjecture}{Conjectures}

\theoremstyle{definition}
\newtheorem{defn}{Definition}
\crefname{defn}{Definition}{Definitions}

\crefname{prob}{Problem}{Problems}
\crefname{algorithm}{Algorithm}{Algorithms}

\newcommand{\Rmn}{\mathbb{R}^{n \times m}}

\newcommand{\Rnn}{\mathbb{R}^{n \times n}}

\newcommand{\rk}{\textnormal{rank}}

\newcommand{\diag}{\textnormal{diag}}
\newcommand{\sign}{\textnormal{sign}}

\newcommand{\opts}{\star}

\newcommand{\transp}{\mathsf{T}}
\newcommand{\vari}[1]{\text{S}(#1)}

\newcommand{\tp}[1]{{#1}\text{-positive}}
\newcommand{\vardim}[1]{{#1}\text{-variation diminishing}}

\newcommand{\ovd}[1]{$\text{OVD}_{#1}$}

\newcommand{\Con}[1]{{\mathcal{C}^{#1}}}
\newcommand{\Obs}[1]{{\mathcal{O}^{#1}}}

\newcommand{\Toep}[1]{\mathcal{T}_{#1}}
\newcommand{\Hank}[1]{\mathcal{H}_{#1}}

\newcommand{\linf}{\ell_\infty}

\newcommand{\compound}[2]{#1_{[#2]}}

\colorlet{FigColor1}{blue}
\colorlet{FigColor2}{red}
\colorlet{FigColor3}{ao(english)}
\colorlet{FigColor4}{orange}
\pgfplotsset{every axis plot/.append style={line width=1.5pt}}

\crefformat{equation}{\textup{#2(#1)#3}}
\crefrangeformat{equation}{\textup{#3(#1)#4--#5(#2)#6}}
\crefmultiformat{equation}{\textup{#2(#1)#3}}{ and \textup{#2(#1)#3}}
{, \textup{#2(#1)#3}}{, and \textup{#2(#1)#3}}
\crefrangemultiformat{equation}{\textup{#3(#1)#4--#5(#2)#6}}%
{ and \textup{#3(#1)#4--#5(#2)#6}}{, \textup{#3(#1)#4--#5(#2)#6}}{, and \textup{#3(#1)#4--#5(#2)#6}}

\Crefformat{equation}{#2Equation~\textup{(#1)}#3}
\Crefrangeformat{equation}{Equations~\textup{#3(#1)#4--#5(#2)#6}}
\Crefmultiformat{equation}{Equations~\textup{#2(#1)#3}}{ and \textup{#2(#1)#3}}
{, \textup{#2(#1)#3}}{, and \textup{#2(#1)#3}}
\Crefrangemultiformat{equation}{Equations~\textup{#3(#1)#4--#5(#2)#6}}%
{ and \textup{#3(#1)#4--#5(#2)#6}}{, \textup{#3(#1)#4--#5(#2)#6}}{, and \textup{#3(#1)#4--#5(#2)#6}}

\crefdefaultlabelformat{#2\textup{#1}#3}

\hyphenation{op-tical net-works semi-conduc-tor}

\begin{document}

\title{Variation diminishing linear time-invariant systems}
\author{Christian Grussler \thanks{The author is with the University of California Berkeley, Berkeley, CA 94720, USA, 	{\tt\small christian.grussler@berkeley.edu}.} and Rodolphe Sepulchre%
	\thanks{The author is with the Control Group at the Department of Engineering, University of Cambridge, Cambridge CB2 1PZ, United Kingdom,
		{\tt\small r.sepulchre @eng.cam.ac.uk}.}}%

\maketitle

	\begin{abstract}	
This paper studies the variation diminishing property of $k$-positive linear time-invariant (LTI) systems, which map inputs with $k-1$ sign changes to outputs with at most the same variation. We characterize this property for the Toeplitz and Hankel operators of finite-dimensional systems. Our main result is that these operators have a dominant approximation in the form of series or parallel interconnections of $k$ first order positive systems. This is shown by expressing the $k$-positivity of a LTI system as the external positivity (that is, $1$-positivity) of $k$ {\it compound} LTI systems. Our characterization generalizes well known properties of externally positive systems ($k=1$) and totally positive systems ($k=\infty$; also known as relaxation systems). 
\end{abstract}

\section{Introduction}
Positive systems, that is, models that map positive inputs to positive outputs, have gained considerable interest in the recent years, e.g., \cite{farina2011positive,rantzer2015scalable,son1996robust,tanaka2011bounded}. They appear frequently in networks, economics, biology, transport, etc., and are attractive for their favourable analytical properties. Positivity provides computational scalability in many standard control problems such as  Lyapunov analysis \cite{rantzer2015scalable}, optimal control design \cite{tanaka2011bounded}, or system gain computation \cite{farina2011positive}. Specific types of positive systems such as the parallel interconnection of first order lags have already been studied in the context of relaxation systems and passivity \cite{willems1976realization,pates2019optimal}.

Our goal in the present paper is to connect positive systems to the classical theory of total positivity,  where positivity has been studied as a {\it variation diminishing} property. This approach has a long history in statistics and mathematics \cite{karlin1968total,fekete1912uber,Schoenberg1930vari,Schoenberg1951polya}. Variation diminishing properties are expected from any reasonable smoothing operation: a smoothing filter should not output a signal with more irregularities than the input signal. In the theory of total positivity, $k$-positivity refers to the variation diminishing property of inputs with at most $k-1$ variations. External (input-output) positivity ($k=1$) and total positivity ($k=\infty$) are the two extremes of a hierarchical structure. While characterizations for the extreme cases $k=1$ and $k=\infty$ have been well studied, analogues for the cases $1 < k < \infty$, even in the simplest situation of finite-dimensional linear time-invariant (LTI) systems, are missing. The objective in this paper is to study the $k$-positivity of the Hankel and Toeplitz operators of LTI systems. We build upon the classical theory of Karlin \cite{karlin1968total} to provide the following results: first, $k$-positivity of a positive LTI system is equivalent to external positivity (that is, $1$-positivity) of $k$ compound LTI systems. This is important as it provides us with computational and theoretical tractability; second, the positivity of the compound systems implies that the dominant dynamics of a $k$-positive system has a simple decomposition in terms of $k$ first-order positive systems: parallel/serial interconnection of first order lags for the Hankel/Toeplitz operator. In other words, the $k$-th order dominant approximation of a $k$-positive system is totally positive with respect to its operator structure.

The interesting special case of systems with $2$-positive Toeplitz operator has been studied in preliminary work \cite{grussler2018strongly}. Such systems map unimodal (single-peaked) inputs to unimodal outputs, which explains their particular importance in statistics, as kernels that preserve the unimodality of many important distributions \cite{karlin1968total,Schoenberg1951polya,ibragimov1956composition} (see~Figure~\ref{fig:UNIMOD}).

\tikzstyle{int}=[draw,minimum width=1cm, minimum height=1cm, very thick, align = center]
\begin{figure}
	\hspace*{0.02 cm}
	\begin{center}
		\begin{tikzpicture}[>=latex']
		\node (a) [int] {$1$-positive LTI system}; 
		\node (b) [left of=a,node distance=4cm, coordinate]{};
		\node [coordinate] (end) [right of=a, node distance=4cm]{};
		
		\path[->,thick] (b) edge node[below]{input} node(u_inc)[above]{} (a);
		\begin{axis}[ticks = none,width = 3 cm,at=(u_inc), anchor={south}]
		\addplot[line width = 1 pt, color = FigColor1] file{u_inc1.txt};
		\end{axis}
		
		\path[->,thick] (a) edge node[below]{output} node(y_inc)[above,midway]{} (end);
		\begin{axis}[ticks = none,width = 3 cm,at=(y_inc), anchor={south}]
		\addplot[line width = 1 pt, color = FigColor2] file{y_inc1.txt};
		\end{axis}
		
		\node (a1) [int, below of = a, anchor = north, yshift = - .5 cm] {$2$-positive LTI system}; 
		\node (b1) [left of=a1,node distance=4cm, coordinate]{};
		\node [coordinate] (end1) [right of=a1, node distance=4cm]{};
		
		\path[->,thick] (b1) edge node[below]{input} node(u_inc)[above]{} (a1);
		\begin{axis}[ticks = none,width = 3 cm,at=(u_inc), anchor={south}]
		\addplot[line width = 1 pt, color = FigColor1] file{u_uni1.txt};
		\end{axis}
		
		\path[->,thick] (a1) edge node[below]{output} node(y_inc)[above,midway]{} (end1);
		\begin{axis}[ticks = none,width = 3 cm,at=(y_inc), anchor={south}]
		\addplot[line width = 1 pt, color = FigColor2] file{y_uni1.txt};
		\end{axis}
		
		\end{tikzpicture}
	\end{center}
	\caption{A 1-positive (externally positive) LTI system maps monotone inputs to monotone outputs. A $2$-positive LTI system maps unimodal inputs to unimodal outputs.}
	\label{fig:UNIMOD}
\end{figure}

The proposal in that paper is that the variation diminishing property is an important system property that is amenable to a tractable analysis and open new analysis avenues for nonlinear systems. While the paper primarily focuses on LTI systems, we suggest in the example section that the analysis extends to cascades of LTI systems with static non-linearities. Such structures are frequently used in machine learning and in biology to model non-linear filters and fading memory operators. We envision that the use of positivity to characterize the variation diminishing, that is, smoothing, property of such non-linear filters could find promising applications in the analysis of large-scale interconnections of such basic elements.

The remainder of the paper is organized as follows. After some preliminaries in Section~\ref{sec:prelim}, the theory of variation diminishment and total positivity is reviewed in Section~\ref{sec:vardim}. In Section~\ref{sec:hankel}, we present our main results for the Hankel operator case, where also the notion of compound systems is introduced and discussed. Then, in Section~\ref{sec:toep}, we briefly discuss analogous for the Toeplitz operator. The paper ends with a presentation of examples in Section~\ref{sec:app} and concluding remarks in Section~\ref{sec:conc}. Proofs are given in the appendix. Our exposition is limited to single-input single-output discrete-time systems, but a similar treatment for continuous-time  systems can be applied.

\section{Preliminaries}\label{sec:prelim}
	\subsection{Notations}
	\subsubsection{Sets}
 For $\mathcal{S} \in \mathds{R}$, we use $\mathcal{S}_{\geq k} := \mathcal{S} \cap [k, \infty)$, e.g., $\mathds{R}_{\geq 0} = [0,\infty)$ and $\mathds{Z}_{\geq 0} = \mathds{N}_{0}$. Further, for $k, l \in \mathds{Z}$, we use $(k:l) := \{k,k+1,\dots,l\}$ if $k \leq l$ and $(k:l) := \{k,k-1,\dots,l\}$ if $l \leq k$. We define the $i-th$ elements of the $r$-tuples in $\mathcal{I}_{n,r} := \{ v = \{v_1,\dots,v_r\} \subset \mathds{N}: 1\leq v_1 <  \dots < v_r \leq n \}$
	 by \emph{lexicographic ordering}, i.e., for $\bar{v}, \tilde{v} \in \mathcal{I}_{n,r}$ it holds that $\bar{v} < \tilde{v}$ if and only if $\bar{v}_{i^\ast} < \tilde{v}_{i^\ast}$, $i^\ast := \min \{i: \bar{v}_i \neq \tilde{v}_i \}$. For example, $\mathcal{I}_{4,3}$ is sorted in the order $(1,2,3)$, $(1,2,4)$, $(2,3,4)$.
	\subsubsection{Matrices}
	For real valued matrices $X = (x_{ij}) \in \Rmn$, including vectors $x = (x_i) \in \mathbb{R}^n$, we say that $X$ is \emph{nonnegative}, $X \geq 0$ or $X \in \Rmn_{\geq 0}$, if all elements $x_{ij} \in \mathbb{R}_{\geq 0}$. Further, if $X \in \Rnn$, then $\sigma(X) = \{\lambda_1(X),\dots,\lambda_n(X)\}$ denotes its \emph{spectrum}, where the eigenvalues are ordered by descending absolute value, i.e., $\lambda_1(X)$ is the eigenvalue with the largest magnitude, counting multiplicity. In case that the magnitude of two eigenvalues coincides, we subsort them by decreasing real part. 
	The \emph{identity matrix} in $\Rnn$ is denoted by $I_n$. A matrix $X \in Rnn$ is said to be \emph{positive semidefinite}, $X \succeq 0$, if $X = X^\transp$ and $\sigma(X) \subset \mathbb{R}_{\geq 0}$. By Sylvester's criterion \cite[Theorem~7.2.5]{horn2012matrix} for positive definiteness
	\begin{equation}
	X\succ 0 \; \Leftrightarrow \; \det(X_{(1:j),(1:j)}) > 0, \; 1\leq j\leq n, \label{eq:psd}
	\end{equation}
	where \emph{submatrices} are written as $X_{I,J} := (x_{ij})_{i \in I, j \in J}$ with $I \subset (1:n)$ and $J \subset (1:m)$. If $I$ and $J$ have both cardinality $r$, then $\det(X_{I,J})$ is also referred to as an \emph{$r$-minor}. A minor is called \emph{consecutive}, if $I$ and $J$ are intervals. The so-called \emph{Desnanot-Jacobi identity}
	\begin{equation}
		\det(X) \det(X_{(2:n-1),(2:n-1)})  = \det\begin{pmatrix}
	\det(X_{(1:n-1),(1:n-1)}) & \det(X_{(1:n-1),(2:n)})\\
	\det(X_{(2:n),(1:n-1)}) & \det(X_{(2:n),(2:n)})
	\end{pmatrix} \label{eq:Sylvester_id}
	\end{equation}
	shows how the determinant can be computed from consecutive minors. The \emph{$r$-th multiplicative compound matrix} $\compound{X}{r}$ of $X \in \Rmn$ is defined as the matrix that is made of all $r$-minors and whose $(i,j)$-th entry is defined by $\det(X_{(I,J)})$, where $I$ is the $i$-th and $J$ is the $j$-th element in $\mathcal{I}_{n,r}$ and $\mathcal{I}_{m,r}$, respectively. For example, if $X \in \mathbb{R}^{3 \times 3}$, then $\compound{X}{r}$ reads
\begin{align*}
\begin{pmatrix}
\det(X_{\{1,2 \},\{1,2 \}}) & \det(X_{\{1,2 \},\{1,3\}}) & \det(X_{\{1,2 \},\{2,3\}})\\
\det(X_{\{1,3 \},\{1,2 \}}) & \det(X_{\{1,3 \},\{1,3\}}) & \det(X_{\{1,3 \},\{2,3\}})\\
\det(X_{\{2,3 \},\{1,2 \}}) & \det(X_{\{2,3 \},\{1,3\}}) & \det(X_{\{2,3 \},\{2,3\}})\\
\end{pmatrix}.
\end{align*}
By the \emph{Cauchy-Binet formula} \cite{karlin1968total} with $X \in \mathbb{R}^{n \times p}$ and $Y \in \mathbb{R}^{p \times m}$
\begin{equation}
\det((XY)_{I,J}) = \sum_{K \in \mathcal{I}_{p,r}} \det(X_{I,K}) \det(Y_{K,J}), \label{eq:CB}
\end{equation}
where $I \in \mathcal{I}_{n,r}$, $J \in \mathcal{J}_{m,r}$ and $r \leq p$, it is easy to verify the following properties \cite[Chapter~6]{fiedler2008special}.
\begin{lem}\label{lem:compound_mat}
	Let $X \in \mathbb{R}^{n \times p}$, $Y \in \mathbb{R}^{p \times m}$ and $r \in \mathds{Z}_{\geq 1}$.
	\begin{enumerate}[i)]
		\item $C_{[r]}(XY) = \compound{X}{r}\compound{Y}{r}$.
		\item $\sigma(\compound{X}{r}) = \{\prod_{i \in I} \lambda_i(X): I \in \mathcal{I}_{n,r} \}$.
		\item If $X \succeq 0$, then $\compound{X}{r} \succeq 0$.
	\end{enumerate} 
\end{lem}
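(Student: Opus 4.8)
The plan is to obtain all three items from the Cauchy--Binet formula \eqref{eq:CB}, with a triangularization step doing the real work in ii).

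For i), I would simply unwind the definitions. The $(i,j)$-th entry of $\compound{(XY)}{r}$ is $\det((XY)_{I,J})$, where $I$ and $J$ are the $i$-th and $j$-th elements of $\mathcal{I}_{n,r}$ and $\mathcal{I}_{m,r}$, respectively. Applying \eqref{eq:CB} rewrites this as $\sum_{K \in \mathcal{I}_{p,r}} \det(X_{I,K})\det(Y_{K,J})$. Reading $\det(X_{I,K})$ as the $(i,k)$-th entry of $\compound{X}{r}$ and $\det(Y_{K,J})$ as the $(k,j)$-th entry of $\compound{Y}{r}$ (with $K$ the $k$-th element of $\mathcal{I}_{p,r}$), this sum is precisely the $(i,j)$-th entry of the matrix product $\compound{X}{r}\compound{Y}{r}$. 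The only point to verify is that the lexicographic row/column indexing of the three compounds is mutually consistent, which holds by the definition of the compound matrix.

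For ii), I would reduce to the triangular case. Over $\mathbb{C}$, take a Schur decomposition $X = S T S^{-1}$ with $T$ upper triangular carrying the eigenvalues $\lambda_i(X)$ on its diagonal. By i), $\compound{X}{r} = \compound{S}{r}\,\compound{T}{r}\,\compound{(S^{-1})}{r}$, and since $\compound{S}{r}\compound{(S^{-1})}{r} = \compound{(SS^{-1})}{r} = \compound{(I_n)}{r} = I_{\binom{n}{r}}$ (again by i)), we get $\compound{(S^{-1})}{r} = (\compound{S}{r})^{-1}$; hence $\compound{X}{r}$ is similar to $\compound{T}{r}$ and shares its spectrum. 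It then suffices to show that $\compound{T}{r}$ is upper triangular with diagonal entries $\prod_{i \in I} t_{ii}$. The diagonal claim is immediate, since $T_{I,I}$ is itself upper triangular and so $\det(T_{I,I}) = \prod_{i \in I} t_{ii}$. The triangularity is the crux: writing $I = \{i_1 < \dots < i_r\}$ and $J = \{j_1 < \dots < j_r\}$, every nonzero term of $\det(T_{I,J})$ comes from a bijection pairing each row $i_a$ to a column $j_b \geq i_a$; if $J$ precedes $I$ lexicographically and $a^\ast$ is the first index where they differ (so $j_{a^\ast} < i_{a^\ast}$), then $j_1 < \dots < j_{a^\ast} < i_{a^\ast}$, so at most $r - a^\ast$ columns satisfy $j_b \geq i_{a^\ast}$, whereas the $r - a^\ast + 1$ rows $i_{a^\ast}, \dots, i_r$ each require such a column, which is impossible by pigeonhole. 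Thus $\det(T_{I,J}) = 0$ below the diagonal, and the diagonal of $\compound{T}{r}$ runs over exactly $\{\prod_{i \in I}\lambda_i(X) : I \in \mathcal{I}_{n,r}\}$, which is therefore its spectrum.

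For iii), I would appeal directly to the paper's definition of positive semidefiniteness, namely $X = X^\transp$ together with $\sigma(X) \subset \mathbb{R}_{\geq 0}$. Symmetry gives $(X_{I,J})^\transp = X_{J,I}$ and hence $\det(X_{I,J}) = \det(X_{J,I})$ for all $I,J$, so $\compound{X}{r}$ is symmetric; and by ii) its eigenvalues are the products $\prod_{i \in I}\lambda_i(X)$ of nonnegative numbers, hence nonnegative, whence $\compound{X}{r} \succeq 0$. (Alternatively, a factorization $X = LL^\transp$ combined with i) and $\compound{(L^\transp)}{r} = (\compound{L}{r})^\transp$ exhibits $\compound{X}{r} = \compound{L}{r}(\compound{L}{r})^\transp$ directly.) The only genuinely non-mechanical step in the whole argument is the lexicographic triangularity bookkeeping in ii); items i) and iii) are routine consequences of Cauchy--Binet and the definitions.
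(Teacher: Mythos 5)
Your proof is correct, and it follows essentially the route the paper itself indicates: the paper does not spell out a proof but states that the lemma is "easy to verify" from the Cauchy--Binet formula \cref{eq:CB}, citing \cite[Chapter~6]{fiedler2008special}. Your write-up is the standard argument behind that citation --- i) is immediate from Cauchy--Binet, ii) via triangularization plus the lexicographic pigeonhole step (which you handle correctly), and iii) from symmetry of minors together with ii) --- so you have simply supplied the details the paper omits.
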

Finally, $X \in \mathbb{R}^{n \times n}$ is referred to as \emph{Toeplitz (Hankel, respectively)} if $X$ is constant along its (anti-, respectively) diagonals.

	\subsubsection{Functions}
	In the following, we introduce several notations for functions $g: \mathds{Z} \to \mathbb{R} \cup \{\pm \infty\}$. %
	We write $g \geq 0$ for a \emph{nonnegative function} $g: \mathds{Z} \to \mathbb{R}_{\geq0}$ and $g(i:j) := \begin{pmatrix}
g(i) & \dots & g(j)
\end{pmatrix}^\transp$ for snapshots. Further, for \emph{compositions} with $\sigma: \mathds{R} \to \mathds{R}$, define $\sigma(g)(t) := \sigma(g(t))$. 
By denoting the \emph{(1-0) indicator function} of $\mathcal{S} \subset \mathds{Z}$ as
	\begin{align*}
	\mathds{1}_{\mathcal{S}}(t) := \begin{cases}
	1 & t \in \mathcal{S}\\
	0 & t \notin \mathcal{S}
	\end{cases},
	\end{align*}
	we express the \emph{Heaviside function} as $s(t) := \mathds{1}_{\mathbb{R}_{\geq 0}}(t)$ and the \emph{unit pulse function} as $\delta(t) := \mathds{1}_{\lbrace 0 \rbrace}(t)$. 
		The $k$-th \emph{forward difference} of $g: \mathds{Z} \to \mathbb{R}$ is abbreviated by $\Delta^{(k)} g(t) := \Delta^{(k-1)} g(t+1) - \Delta^{(k-1)}g(t)$ with $\Delta^{(1)} g(t) := g(t+1) - g(t)$. %
		
		We call $g: \mathds{Z} \to \mathbb{R} \cup \{\infty\}$ \emph{convex} if for all $t_1,t_2 \in \mathds{Z}$ and $0 \leq \lambda \leq 1$ it holds that
		\begin{align}
		\label{eq:def_cvx}
		\lambda g(t_1) + (1-\lambda)g(t_2) &\geq \min_{u \in \mathcal{B}(z) } g(u) %
		\end{align}
where $z := \lambda t_1 + (1-\lambda)t_2$ and $\mathcal{B}(z) := \{t \in \mathds{Z}: |z-t| < 1 \}$. $g$ is convex if and only if $\Delta^{(2)} g(t) \geq 0$ for all $t$ \cite{yuccer2002discrete}. %
If $-g$ is convex, then $g$ is called \emph{concave}. Further, for an interval $\mathcal{S} \subset \mathds{Z}$ we call $g: \mathcal{S} \to \mathds{R}_{\geq 0} \cup \{\infty\}$ \emph{logarithmically (log-)convex} on $\mathcal{S}$ if $\log(g)$ is convex or equivalently $g(t)g(t+2) - g(t+1)^2 \geq 0$ for all $t \in \mathcal{S}$. Analogously, $g$ is \emph{logarithmically (log-)concave} if $\frac{1}{g}$ is log-convex or  $g(t+1)^2-g(t)g(t+2)\geq 0$ for all $t \in \mathcal{S}$. The sets of all bounded functions is denoted by $\linf$.

\subsection{Linear discrete time systems}
We consider finite dimensional {causal linear discrete time-invariant (LTI) systems} with (scalar) input $u$ and (scalar) output $y$. The \emph{impulse response} is the output corresponding to the input $u(t) = \delta(t)$. The \emph{transfer function} of the system is
\begin{equation}
G(z) = \sum_{t=0}^\infty g(t)z^{-t} = r \frac{ \prod_{i=1}^{m} z-z_i}{\prod_{j=1}^{n}z-p_i},
\end{equation}
where $m \leq n$, $r \in \mathbb{R}$, $p_i$ and $z_i$ are referred to as \emph{poles} and \emph{zeros}, which are both sorted in same way as the eigenvalues of a matrix. For national convenience, we assume that all systems are non-zero and \emph{strictly proper}, i.e., $m < n$. The tuple $(A,b,c)$ is then referred to as a \emph{state-space realization} if
\begin{equation}\label{eq:SISO_d}
\begin{aligned}
x(t+1) = Ax(t) + bu(t), \quad y(t) = cx(t), 
\end{aligned} 
\end{equation}
with $A \in \mathbb{R}^{n\times n}$, $b, c^\transp \in \mathbb{R}^n$. It holds then that \begin{equation}
g(t) = cA^{t-1}bs(t-1)
\end{equation}
 In this work, we assume that $g,u,y \in \linf$, that the poles and zeros of a transfer function are disjoint and that $G(z) \not \equiv 0$. If the same applies to $c(zI_n-A)^{-1}b$, then $(A,b,c)$ is called minimal and $G(z) = c(zI_n-A)^{-1}b$. 
Further, if the system has simple poles then by the partial fraction decomposition
$G(z) = \sum_{i=1}^{n} \frac{r_i}{z-p_i}$. A system $G(z)$ that has all poles at zero is called a \emph{finite impulse response (FIR)} system.

The \emph{Hankel} operator 
	\begin{align*}
	(\Hank{g} u)(t) &:= \sum_{\tau=-\infty}^{-1} g(t-\tau)u(\tau) = \sum_{\tau=1}^{\infty} g(t+\tau)u(-\tau),\ t \geq 0 %
	\end{align*}

describes the output $y(t)$ corresponding to the input $u(t) = u(t)(1-s(t))$. It is a mapping from past inputs $(t \in (-\infty, 0)$ to future outputs ($t \geq 0$) under the convolution with the impulse response. The \emph{Toeplitz operator}
	\begin{align*}
	(\Toep{g} u)(t) &:= \sum_{\tau=0}^{t} g(t-\tau)u(\tau), \ t \geq 0 %
	\end{align*}  
maps a future input $u(t)$, $t \ge 0$, to the corresponding output $y(t)$, $t \ge 0$. In a state-space model, the Hankel operator models the free response of the system for an initial condition $x(0)$ (that parametrizes the past input), while the Toeplitz operator models the input-output behavior under the assumption that $x(0) = 0$. By defining for $j \geq 1$\begin{subequations}
\begin{equation*}%
H_g(t,j) := \begin{pmatrix}
g(t) & g(t+1) & \dots & g(t+j-1)\\
g(t+1) & g(t+2) & \dots & g(t+j)\\
\vdots & \vdots & \ddots   & \vdots \\
g(t+j-1)   & g(t+j) & \dots & g(t-2(j-1))\\
\end{pmatrix}
\end{equation*}
with $t\geq 1$ and
\begin{equation*}%
T_g(t,k) := \begin{pmatrix}
g(t) & g(t-1) & \dots & g(t-k+1)\\
g(t+1) & g(t) & \dots & g(t-j)\\
\vdots & \vdots & \ddots   & \vdots \\
g(t+k-1)   & g(t+k-2) & \dots & g(t)\\
\end{pmatrix}
\end{equation*}
with $t \geq 0$,
\end{subequations}
these operators can be represented by the finitely truncated matrix representations $\Hank{g}^ju := H_g(1,j) u(-1:-j)$ and $\Toep{g}^ju := T_g(0,j) u(0:j),$ as
$\Hank{g}u = \lim_{j \to \infty} \Hank{g}^ju$ and $\Toep{g}u = \lim_{j \to \infty} \Toep{g}^ju$. In particular, for a state-space realization $(A,b,c)$
\begin{equation}
H_g(t,j) = \Obs{j}(A,c) A^{t-1}\Con{j}(A,b) \label{eq:Hankel_mat_real}
\end{equation}
with the extended controllability and observability matrices
\begin{subequations}
	\begin{align}
	\Con{j}(A,b) &:= \begin{pmatrix}
	b & Ab & \dots & A^{j-1} b
	\end{pmatrix},\\
	\Obs{j}(A,c) &:= \begin{pmatrix}
	c^\transp & A^\transp c^\transp & \dots & {A^\transp}^{j-1} c^\transp
	\end{pmatrix}^\transp.
	\end{align}
\end{subequations}
If $(A,b,c)$ is a minimal realization, it holds then that $\rk(H_g(1,j)) = j$ for $j \leq n$.

\section{The Variation Diminishing Property}
\label{sec:vardim}
The variation of a sequence or vector $u$ is defined as the number of sign changes in $u$, i.e.,
$$\vari{u} := \sum_{i \geq 1} \mathbb{1}_{\mathbb{R}_{< 0}}(\tilde{u}_i \tilde{u}_{i+1}), \; S(0) := 0$$
where  $\tilde{u}$ is obtained from deleting all zero entries in $u$.
A linear map $X$ is said to be \emph{order preserving $k$-variation diminishing (\ovd{k})}, $k \in \mathds{Z}_{\geq 0}$, if for all $u$ with $\vari{u} \leq k$ it holds that
\begin{enumerate}[i.]
    \item $\vari{Xu} \leq \min\{\rk(X)-1,\vari{u}\}$.
    \item The sign of the first non-zero elements in $u$ and $Xu$ coincide whenever $\vari{u} = \vari{Xu}$.
\end{enumerate}
If the second item is dropped, $X$ is simply called \emph{\vardim{k}}. Investigations of such linear such mappings have a long history. They include the determination of real poles \cite{fekete1912uber}, applications in interpolation theory \cite{Schoenberg1951polya,karlin1968total}, vibrational systems \cite{gantmacher1950oszillationsmatrizen}, computer vision \cite{lindeberg1990scale} as well as bounding the number of sign-changes in an impulse response \cite{elkhoury1993discrete,elkhoury1993influence}. The theory applies to finite dimensional matrices as in \cite{Schoenberg1930vari,karlin1968total} as well as to convolution operators \cite{karlin1968total,Schoenberg1951polya,Samworth2017,ibragimov1956composition}. 

The goal of this paper is to provide a characterization of \ovd{k} for finite-dimensional LTI systems. We study separately the variation diminishing property for the Hankel operator and for the Toeplitz operator. We start our investigations by reviewing the most well-known cases of $k=0$, $k=1$ and $k = \infty$. The case $k=0$ will be instrumental for our new developments from which the other cases can be recovered as part of our general analysis. We conclude this section by a short review of \emph{total positivity theory}.

\subsection{\ovd{0}-systems}
A Toeplitz \ovd{0}-system $G(z)$ maps zero variation inputs $u \geq 0$ to zero variation outputs $y \geq 0$, which coincides with the definition of so-called \emph{externally positive systems} \cite{Farina2000,ohta1984reachability}. Since $u(t) = \delta(t) \geq 0$, it equivalently holds that $g(t)\geq 0$.  An important frequency domain property of an externally positive systems is their dominant approximation in form of a first order lag. 
\begin{prop}
	\label{prop:dominantpole}
	Let $G(z)$ be an externally positive system with dominant pole $p_1$. Then, $p_1 \geq 0$ and all real zeros of the system are smaller than $p_1$. Further, if $G(z) = \sum_{i\geq1 }\frac{r_{i}}{z-p_i}$, then $r_{1} > 0$.
\end{prop}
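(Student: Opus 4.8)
The backbone of my argument is Pringsheim's theorem on power series with nonnegative coefficients. Since external positivity is equivalent to $g(t) \geq 0$ for all $t$, the function $\tilde{G}(w) := G(1/w) = \sum_{t \geq 0} g(t) w^t$ is a power series with nonnegative coefficients. Because $g \in \linf$ and the spectral radius equals $|p_1|$, its radius of convergence is $R = 1/|p_1|$ (assuming $p_1 \neq 0$; if $p_1 = 0$ the system is FIR and $p_1 \geq 0$ holds trivially). First I would invoke Pringsheim's theorem to conclude that $w = R$ is a singularity of $\tilde{G}$. As $\tilde{G}$ is rational with poles exactly at $w = 1/p_i$, the point $w = R$ must equal some $1/p_j$, forcing $p_j = |p_1| > 0$ to be real and positive. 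Since $p_j$ attains the maximal magnitude and, being positive real, also the maximal real part among poles of that magnitude, the sorting convention identifies it as $p_1$; hence $p_1 = |p_1| \geq 0$.

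The claim on the zeros is then immediate and does not need Pringsheim. For any real $x > p_1 = |p_1|$ the value $x$ exceeds the spectral radius, so the series $G(x) = \sum_{t \geq 1} g(t) x^{-t}$ converges. Every summand is nonnegative and, since $G \not\equiv 0$, at least one is strictly positive, so $G(x) > 0$. Thus $G$ has no zero in $(p_1, \infty)$, and because poles and zeros are assumed disjoint there is no zero at $p_1$ either; hence every real zero is strictly smaller than $p_1$.

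For the residue I would study the asymptotics $g(t) = \sum_i r_i p_i^{t-1}$ (simple poles). Writing $\rho := p_1$, the normalized sequence $g(t)/\rho^{t-1}$ equals $r_1$ plus the contributions of the remaining maximal-magnitude poles---complex-conjugate pairs $\rho e^{\pm i \theta}$ producing bounded oscillatory terms of mean zero---plus a term $o(1)$ from the strictly smaller poles. Taking a Ces\`aro average over $t = 1, \dots, N$ kills the oscillatory terms, so the average tends to $r_1$; since every $g(t)/\rho^{t-1} \geq 0$, this gives $r_1 \geq 0$. The hard part is upgrading this to $r_1 > 0$: if $r_1 = 0$, the normalized sequence would be a nontrivial bounded quasi-periodic sequence of zero mean that stays $\geq -o(1)$, which is impossible because such a sequence must dip strictly below $0$ infinitely often. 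This forces the offending complex poles to have zero residue, contradicting their being poles, and so $r_1 > 0$. I expect this equal-magnitude case to be the main obstacle; an alternative that sidesteps the averaging is to note that $\tilde{G}(w) \to +\infty$ as $w \to R^{-}$ (the monotone limit of a nonnegative series at a pole), where the simple pole at $w = R$ contributes the Laurent coefficient $-r_1/p_1^2$, whose negativity again yields $r_1 > 0$.
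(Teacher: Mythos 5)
Your proof is correct, but note that the paper itself gives no proof of Proposition~\ref{prop:dominantpole}: it is invoked as a classical fact (the frequency-domain analogue of Perron--Frobenius for externally positive systems), so there is no in-paper argument to compare against. Your Pringsheim route is a standard and complete way to establish it. The reduction of external positivity to $g\geq 0$, the identification of the radius of convergence of $\sum_t g(t)w^t$ with $1/|p_1|$, the conclusion that the singularity at $w=R$ is a pole of the rational function $\tilde G$ and hence $p_1=|p_1|$ under the paper's tie-breaking convention, and the positivity of $G(x)$ for real $x>p_1$ (plus the standing pole/zero disjointness assumption to exclude $x=p_1$) are all sound. For the residue claim, your second argument is the one to keep: since $w=R$ is a genuine (simple) pole of $\tilde G$ and $\tilde G$ is a nondecreasing nonnegative function on $[0,R)$, we must have $\tilde G(w)\to+\infty$ as $w\to R^-$, and the sign of the Laurent coefficient $-r_1/p_1^2$ then forces $r_1>0$; this is clean and airtight. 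The primary Ces\`aro argument is the weak spot you yourself flag: showing that a nontrivial zero-mean quasi-periodic sequence plus an $o(1)$ term cannot stay nonnegative requires an additional almost-periodicity step (e.g.\ comparing the Bohr mean of $h$ with that of $h^2$), which you assert rather than prove. Since the alternative closes the case completely, the proof stands, but in a polished write-up I would lead with the pole-divergence argument and drop the averaging.
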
 
In particular, every first order lag $G(z) = \frac{r_1}{z-p_1}$ with $r_1,p_1 \geq 0$ is externally positive. Note that the verification of external positivity is in general NP-hard \cite{blondel2002presence}, but under mild conditions good sufficient certificates \cite{Farina2000,grussler2019tractable,drummond2019external} are available. 

\subsection{\ovd{1}-systems} \label{subsec:2_dim}
By definition, \ovd{1} systems are \ovd{0} systems that map inputs $u$ with one sign change to outputs $y$ with at most one sign change, where the order of the sign changes coincide if $\vari{u} = \vari{y} = 1$. In the statistics literature, \ovd{1} integral kernels are called \emph{strongly unimodal} \cite{karlin1968total,Schoenberg1951polya,Samworth2017,ibragimov1956composition}. This terminology reflects the fact that due to linearity, one can equivalently consider the variation diminishment of the forward differences, meaning that unimodal ($\vari{\Delta^{(1)}u} \leq 1$) inputs are mapped to unimodal outputs. As the inputs $\delta(t)$ and $\delta(t+1)$ are unimodal, it follows that $g$ needs to be unimodal for $G(z)$ to be Toeplitz and Hankel \ovd{1}, respectively. 

Unfortunately, this is not a sufficient characterization. On the one hand, the Hankel case requires that $g$ is log-convex on $\mathds{Z}_{> 0}$. In particular, this means that $g$ is convex unimodal and therefore monotonically decreasing \cite{boyd2004convex}. On the other hand, the Toeplitz case needs $g$ to be log-concave on $\mathds{Z}_{\geq 0}$ \cite{karlin1968total}. The only systems that are both log-convex and -concave are first order ones, where the inequalities hold with equality. To illustrate these differences, consider 
\begin{align*}
G_1(z) = \frac{r_1}{z-p_1} \frac{r_2}{z-p_2},  \quad G_2(z) = \frac{r_1}{z-p_1} + \frac{r_2}{z-p_2}
\end{align*}
with $r_1,r_2, p_1,p_2 > 0$. $G_1(z)$ is Toeplitz \ovd{1} as the parallel interconnection of Toeplitz \ovd{1} systems. However, as a second order system, $g_1$ cannot be log-convex. The opposite holds for $G_2(z)$ as log-convexity is preserved under summation (see~Lemma~\ref{lem:intercon}).

Our results will show that those basic transfer functions are somewhat fundamental. In particular, Toeplitz \ovd{1} requires dominant dynamics in the form of $G_1(z)$  while Hankel \ovd{1} requires dominant dynamics in the form of $G_2(z)$.
\subsection{Totally \ovd{}-systems}
Totally \ovd{}-systems diminish the variation from all inputs $u$ to outputs $y$ and preserve the sign change order whenever $\vari{u} = \vari{y}$. The statistics literature \cite{karlin1968total,Aissen1952generating} offers the following frequency domain characterization.
\begin{prop}\label{prop:toep_inf_pos}
For $G(z)$ it holds that
\begin{enumerate}[i)]
	\item $G(z)$ is Toeplitz totally \ovd{} if and only if
	$G(z) =  \prod_{i=1}^n \frac{r_i z+\alpha_i}{z-p_i}$
	$\alpha_i, r_i, p_i \geq 0$, i.e., it is the serial interconnection of first order lags with negative zeros.
	\item $G(z)$ is totally Hankel totally \ovd{} if and only if $G(z) = \sum_{i=1}^{n} \frac{r_i}{z-p_i}$,
	where $r_i , p_i \geq 0$, i.e., it is the parallel interconnection of first order lags.
\end{enumerate}
\end{prop}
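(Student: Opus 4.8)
The plan is to reduce both statements to the total positivity of the operators' matrix representations and then to invoke the classical structure theory for totally positive Toeplitz and Hankel matrices. The bridge is Karlin's fundamental variation-diminishing theorem \cite{karlin1968total}: a matrix is order-preserving variation diminishing for inputs of arbitrary variation precisely when it is \emph{totally positive}, i.e.\ all its minors are nonnegative, with the rank bound in item i.\ and the sign-matching item ii.\ of the definition corresponding exactly to the strict-sign-regularity refinements of that theorem. Applying this to the truncations $T_g(0,j)$ and $H_g(1,j)$ for every $j$, the operator $\Toep{g}$ (resp.\ $\Hank{g}$) is totally \ovd{} if and only if the associated infinite Toeplitz (resp.\ Hankel) matrix is totally positive. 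It thus remains to characterize, in the frequency domain, when $g$ induces a totally positive Toeplitz or Hankel matrix.

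For the \emph{if} directions I would argue constructively via the multiplicativity of compound matrices. In the Toeplitz case, writing $w := z^{-1}$, each factor becomes $\frac{r_i z + \alpha_i}{z-p_i} = (r_i + \alpha_i w)\sum_{t\geq 0} p_i^t w^t$, the product of the elementary Pólya-frequency generators $r_i + \alpha_i w$ and $(1-p_i w)^{-1}$ with $\alpha_i,r_i,p_i\geq 0$; one checks directly that each such factor generates a totally positive lower-triangular Toeplitz matrix, and since serial interconnection corresponds to a product of Toeplitz matrices, \Cref{lem:compound_mat} (multiplicativity of compounds) together with the Cauchy-Binet formula \cref{eq:CB} shows that total positivity is preserved, so the interconnection is totally positive. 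In the Hankel case, $g(t) = \sum_{i=1}^n r_i p_i^{t-1}$ yields the factorization $H_g(1,j) = V D V^\transp$ with $D = \diag(r_1,\dots,r_n)$ and $V$ the Vandermonde matrix $V_{ki} = p_i^{k-1}$; ordering the nonnegative nodes $p_i$ increasingly makes $V$ totally positive, $D$ is a nonnegative diagonal and hence totally positive, and Cauchy-Binet again gives that $H_g(1,j)$ is totally positive for every $j$.

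The \emph{only if} directions are the main obstacle, since they rest on the deep classical structure theorems rather than on elementary manipulations. For Toeplitz, total positivity of the Toeplitz matrix says precisely that $g$ is a one-sided Pólya frequency sequence; the Aissen-Schoenberg-Whitney/Edrei characterization \cite{karlin1968total,Aissen1952generating} then forces the generating function to have the form $C w^m e^{\gamma w}\prod_i (1+\alpha_i w)/\prod_i(1 - p_i w)$ with nonnegative data, and rationality (finite-dimensionality of $G$) eliminates the entire-function factor $e^{\gamma w}$ and leaves finitely many poles and zeros, which is exactly the claimed product of first order lags with negative zeros. For Hankel, total positivity of the Hankel matrix identifies $g$ with a Stieltjes moment sequence; as $G$ is rational with simple poles, the representing measure must be a finite combination of point masses located at the nonnegative poles $p_i$ with nonnegative weights $r_i$, and reading off the $z$-transform yields the partial-fraction form $\sum_i r_i/(z-p_i)$ with $r_i,p_i\geq 0$. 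The delicate points are verifying that the rank bound in item i.\ is what pins down the system order $n$ through $\rk(H_g(1,j))$ for a minimal realization, and that the sign-matching item ii.\ upgrades nonnegativity of the minors to the order-preserving conclusion; both follow from the strict-positivity refinements in Karlin's theorem.
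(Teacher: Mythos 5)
Your argument is correct in outline, but it takes a genuinely different route from the paper's. You reduce both claims to total positivity of the truncated operator matrices (which is the content of \Cref{prop:SR} combined with the limiting argument of \Cref{prop:op_seq}) and then hand the difficult ``only if'' directions to two classical structure theorems: the Aissen--Schoenberg--Whitney/Edrei characterization of one-sided P\'olya frequency sequences for the Toeplitz operator, and the Stieltjes moment problem for the Hankel operator, with rationality of $G(z)$ killing the transcendental factor and forcing an atomic representing measure, respectively; your ``if'' directions via Cauchy--Binet applied to the bidiagonal/geometric Toeplitz factors and to the factorization $H_g(1,j)=VDV^\transp$ with a nonnegative-node Vandermonde $V$ are sound. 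The paper deliberately does not argue this way: it remarks that the Hankel-operator result is only available in continuous time in the literature, and instead recovers the proposition from its own machinery --- the ``if'' direction from \Cref{lem:intercon}, and the ``only if'' direction by combining \Cref{prop:con_minor} (total positivity as external positivity of all compound systems) with a recursive application of \Cref{thm:decomp_tpk}, together with a separate exclusion of repeated poles via log-convexity of the compound impulse responses. Your route buys brevity and a direct link to the classical literature; the paper's route buys a self-contained discrete-time proof whose intermediate results also cover the finite-$k$ hierarchy, which the classical theorems do not address. Two points to tighten: in the Hankel ``only if'' you write ``as $G$ is rational with simple poles'' --- simplicity of the poles should be a conclusion (an atomic measure with finitely many atoms yields distinct $p_i$), not a hypothesis, cf.\ the paper's proposition at the end of Section~\ref{sec:hankel}; and the assertion that the sign-matching condition ``follows from the strict-positivity refinements in Karlin's theorem'' is precisely \Cref{prop:SR} and should be invoked explicitly rather than gestured at.
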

Hankel totally positive systems are also considered  in \cite{willems1976realization} under the name of \emph{relaxation systems}.
\begin{prop}\label{prop:relax}
   The following are equivalent:
    \begin{enumerate}[i.]
        \item $G(z) = \sum_{i=1}^{n} \frac{r_i}{z-p_i}$ with $r_i , p_i \geq 0.$
        \item $H_g(1,n) \succ 0$ and $H_g(2,n) \succeq 0$.
        \item $(-1)^j \Delta^{(j)}g \geq 0$ for all $j \in \mathds{Z}_{\geq 0}$
    \end{enumerate}
\end{prop}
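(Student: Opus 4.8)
The plan is to funnel everything through the modal (partial-fraction) form of the impulse response. Since $G(z)$ has $n$ distinct poles and is strictly proper, partial fractions give $g(t)=\sum_{i=1}^n r_i p_i^{t-1}$ for $t\ge 1$, with all residues $r_i\neq 0$ by minimality. Setting $V=(p_i^{s-1})_{s,i}\in\mathbb{C}^{n\times n}$, $D=\diag(r_1,\dots,r_n)$ and $P=\diag(p_1,\dots,p_n)$, a direct evaluation of the entries $g(s+t-1)$ and $g(s+t)$ yields the factorizations
\begin{equation*}
H_g(1,n)=VDV^\transp,\qquad H_g(2,n)=VDPV^\transp .
\end{equation*}
Because the poles are distinct, $V$ is an invertible Vandermonde matrix. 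This immediately delivers (i)$\Rightarrow$(ii): if $r_i\ge 0$ then $D\succeq 0$ and $DP\succeq 0$, so both Hankel matrices are positive semidefinite, and since $r_i\neq 0$ forces $D\succ 0$ we even obtain $H_g(1,n)=VDV^\transp\succ 0$.

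For (ii)$\Rightarrow$(i) the crux — and what I expect to be the main obstacle — is to deduce that the poles are \emph{real and nonnegative}; once that is known the rest is bookkeeping. I would argue through the matrix pencil $\lambda H_g(1,n)-H_g(2,n)=VD(\lambda I-P)V^\transp$, whose determinant is $\det(V)^2\det(D)\prod_{i=1}^n(\lambda-p_i)$, so its generalized eigenvalues are exactly the poles $p_i$. But $H_g(1,n)\succ 0$ while $H_g(2,n)$ is symmetric, so this is a symmetric--definite pencil: writing $H_g(1,n)=LL^\transp$, the $p_i$ are the eigenvalues of the symmetric matrix $L^{-1}H_g(2,n)L^{-\transp}$ and are therefore real, and since $H_g(2,n)\succeq 0$ that matrix is positive semidefinite, whence $p_i\ge 0$. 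With the poles now real, $V$ is real and invertible, so $H_g(1,n)=VDV^\transp\succ 0$ forces $D\succ 0$, i.e.\ $r_i>0$. This is precisely (i).

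Finally I would settle (i)$\Leftrightarrow$(iii) from the closed form $(-1)^j\Delta^{(j)}g(t)=\sum_{i=1}^n r_i p_i^{t-1}(1-p_i)^j$, obtained by expanding the forward difference with the binomial theorem. For (i)$\Rightarrow$(iii), the standing assumption $g\in\linf$ together with $r_i,p_i\ge 0$ forces $p_i\le 1$, so every summand is nonnegative for $t\ge 1$ and all $j$. The converse (iii)$\Rightarrow$(i) is the finite-rank instance of the classical complete-monotonicity theorem: a bounded sequence with $(-1)^j\Delta^{(j)}g\ge 0$ for all $j$ is a Hausdorff moment sequence $g(t+1)=\int_0^1 x^t\,d\mu(x)$, and since $g$ is rational of degree $n$ its generating function is rational, forcing $\mu$ to be atomic on the $n$ real poles $p_i\in[0,1]$ with nonnegative masses $r_i$. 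Equivalently, one can show (iii)$\Rightarrow$(ii) — complete monotonicity makes the moment matrices $H_g(1,n),H_g(2,n)$ positive semidefinite, and full rank under minimality upgrades the first to $\succ 0$ — and then invoke the already-established (ii)$\Rightarrow$(i). Either route closes the cycle of equivalences.
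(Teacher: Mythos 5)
Your argument is correct in its key ideas but follows a genuinely different route from the paper. The paper obtains Proposition~\ref{prop:relax} as a byproduct of its compound-system machinery: the equivalence of items i and iii is a recursive application of Lemma~\ref{lem:diff_Hank} to $g_d=-\Delta^{(1)}g$, while the equivalence of items i and ii combines Theorem~\ref{prop:con_minor} with $k=n$, the classical characterization in Proposition~\ref{prop:toep_inf_pos}, and Sylvester's criterion (\ref{eq:psd}), using that the $n$-th compound system $G_{[n]}(z)$ is first order, so its external positivity reduces to $\det(H_g(1,n))>0$ and $\det(H_g(2,n))\geq 0$. You instead give a self-contained linear-algebra proof: the factorizations $H_g(1,n)=VDV^\transp$ and $H_g(2,n)=VDPV^\transp$ make i $\Rightarrow$ ii immediate, the symmetric-definite pencil $\lambda H_g(1,n)-H_g(2,n)$ yields real nonnegative poles for ii $\Rightarrow$ i, and the Hausdorff/complete-monotonicity theorem settles iii $\Rightarrow$ i. What your route buys is independence from the total-positivity apparatus and from Proposition~\ref{prop:toep_inf_pos} (which the paper imports from the statistics literature); what it costs is that the moment-theoretic step is itself an imported classical theorem, whereas the paper's Lemma~\ref{lem:diff_Hank} recursion is elementary.

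One step needs patching. You assume at the outset that $G(z)$ has $n$ distinct poles and write $g(t)=\sum_i r_ip_i^{t-1}$; for the directions ii $\Rightarrow$ i and iii $\Rightarrow$ i this is circular, since realness and simplicity of the poles are part of what item i asserts (the paper proves separately that Hankel total positivity excludes repeated poles). The fix is already contained in your pencil idea: start from the realization-based factorization (\ref{eq:Hankel_mat_real}), $H_g(1,n)=\Obs{n}(A,c)\Con{n}(A,b)$ and $H_g(2,n)=\Obs{n}(A,c)A\,\Con{n}(A,b)$ for a minimal $(A,b,c)$. Writing $H_g(1,n)=LL^\transp$, one checks that $L^{-1}H_g(2,n)L^{-\transp}=\bigl(L^{-1}\Obs{n}(A,c)\bigr)A\bigl(L^{-1}\Obs{n}(A,c)\bigr)^{-1}$ is symmetric positive semidefinite, so $A$ is diagonalizable with real nonnegative eigenvalues; single-input minimality then forces these eigenvalues to be distinct, and only at that point is the Vandermonde form available to conclude $D\succ 0$, i.e., $r_i>0$.
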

Note that the Hankel operator results have been stated in continuous-time in the literature. Proofs for the discrete time case are provided in this paper. 

\subsection{$k$-positive matrices}
Finite-dimensional \ovd{k-1} linear operators (matrices) are a classical subject of matrix theory, e.g., extensively studied in \cite{karlin1968total}.  
\begin{defn}[$k$-positivity]
\label{def:sc}
	A matrix $X \in \Rmn$ is called \emph{(strictly) \tp{k}} with $k \in \mathds{Z}_{>0}$ if all $j$-minors are (positive) nonnegative for $1\leq j \leq k$. If $k = \min\{m,n\}$, we simply say \emph{(strictly) totally positive}. 
\end{defn}
The relationship between \ovd{k-1} and the above definitions is provided by the following result, proven in  Appendix~\ref{proof:prop:SR}.
\begin{prop} \label{prop:SR}
	Let $X \in \Rmn$ with $n \geq m$. Then $X$ is $\tp{k}$ with $k \leq m$ if and only if $X$ is \ovd{k-1}. %
\end{prop}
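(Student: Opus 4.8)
The plan is to prove the two implications separately, writing $S := \vari{\cdot}$ throughout and reducing both directions to determinantal sign bookkeeping via the Cauchy--Binet formula~\eqref{eq:CB} and \Cref{lem:compound_mat}. The forward direction is the substantial one; the converse is a direct construction of a bad input.

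\textbf{Forward direction} (\tp{k} $\Rightarrow$ \ovd{k-1}). Since the clean situation is the strictly positive one, I would first \emph{regularize}. Given \tp{k} $X$, set $X_\varepsilon := G_\varepsilon X H_\varepsilon$ with $G_\varepsilon \in \Rnn$, $H_\varepsilon \in \mathbb{R}^{m \times m}$ strictly totally positive and $G_\varepsilon \to I_n$, $H_\varepsilon \to I_m$ (e.g.\ Pólya-type kernels). By two applications of \eqref{eq:CB}, every $r$-minor of $X_\varepsilon$ with $r \leq \rk(X)$ is a positive combination of the nonnegative $r$-minors of $X$ and at least one of these is positive, hence the $r$-minor of $X_\varepsilon$ is strictly positive; all minors of order $> \rk(X)$ vanish, and $\rk(X_\varepsilon) = \rk(X)$ because $G_\varepsilon, H_\varepsilon$ are nonsingular. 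It therefore suffices to prove the \ovd{k-1} clauses for $X_\varepsilon$ and then take $\varepsilon \to 0$: a strict sign change of $Xu$ (a pair of rows carrying strictly alternating nonzero values) survives small perturbations, so $\vari{Xu} \leq \vari{X_\varepsilon u}$ for small $\varepsilon$; combined with $\rk(X_\varepsilon) = \rk(X)$ the bound $\vari{X_\varepsilon u} \leq \min\{\rk(X_\varepsilon)-1, \vari{u}\}$ transfers verbatim to $X$, and the sign clause transfers the same way.

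For the strict matrix I would argue by contradiction, proving the rank part of clause~i first. If $r := \vari{X_\varepsilon u} \geq \rk(X_\varepsilon) =: d$, take $d+1$ rows $i_0 < \dots < i_d$ on which $y := X_\varepsilon u$ strictly alternates; the $(d{+}1)\times m$ row submatrix has rank $d$, so it has a left null vector $n$, and because all of its $d$-minors are strictly positive, Cramer's rule makes $n$ strictly alternating; then $0 = n^\transp (y_{i_0}, \dots, y_{i_d})^\transp$ is a sum of equally-signed nonzero terms, a contradiction, so $r \leq d-1$. For the variation part, suppose $\vari{u} = s \leq k-1$ yet $\vari{X_\varepsilon u} \geq s+1$; the rank bound just proved forces $s+1 \leq d-1$, so the relevant $(s{+}1)$-minors below are strictly positive. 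Pick rows $i_0 < \dots < i_{s+1}$ on which $y$ strictly alternates, let $B := (X_\varepsilon)_{\{i_0,\dots,i_{s+1}\},\,\cdot}$, and decompose $u = V\mathbf{1}$ along its $s+1$ maximal constant-sign blocks, where the columns of $V \in \mathbb{R}^{m \times (s+1)}$ have disjoint consecutive supports and signs $\delta_1, \dots, \delta_{s+1}$. Then $C := BV \in \mathbb{R}^{(s+2)\times(s+1)}$ has $C\mathbf{1} = (y_{i_0}, \dots, y_{i_{s+1}})^\transp$, and by \eqref{eq:CB} each maximal minor of $C$ equals $\big(\prod_q \delta_q\big)$ times a strictly positive number, since the surviving Cauchy--Binet terms select one index per block. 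Writing the signed maximal minors as a vector $n$ (so $n^\transp C = 0$) gives $\sum_t n_t\, y_{i_t} = 0$, again a sum of equally-signed nonzero terms: contradiction. The sign clause (ii) follows from the same bookkeeping applied when $\vari{X_\varepsilon u} = s$ exactly, where $C$ is square and $\det C = \big(\prod_q \delta_q\big)\cdot(\text{positive})$ ties the leading input sign to the leading output sign.

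\textbf{Converse} (\ovd{k-1} $\Rightarrow$ \tp{k}), by contraposition. Suppose some minor of order $\leq k$ is negative and let $j \leq k$ be minimal, so $X$ is \tp{j-1} and $M := X_{I,J}$ satisfies $\det M < 0$ for some $I \in \mathcal{I}_{n,j}$, $J = \{l_1 < \dots < l_j\} \in \mathcal{I}_{m,j}$. Test with inputs supported on $J$, $u(c) := \sum_t c_t e_{l_t}$, which satisfy $\vari{u(c)} = \vari{c}$ and $(Xu(c))_I = Mc$; note $\rk(X) \geq j$ since $M$ is an invertible submatrix, so the rank term of clause~i never binds and $\vari{Xu(c)} \geq \vari{Mc}$. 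Solving $Mc = \eta$ with $\eta = (1,-1,1,\dots)^\transp$ ($\vari{\eta} = j-1$) and expanding by Cramer's rule, the $(j{-}1)$-positivity makes every cofactor a nonnegative $(j{-}1)$-minor, giving $c_l = (-1)^{l}P_l/|\det M|$ with $P_l \geq 0$. If some $P_l = 0$ then $\vari{u(c)} \leq j-2 < j-1 \leq \vari{Xu(c)}$, violating clause~i; if all $P_l > 0$ then $c$ strictly alternates starting with a negative entry, so $\vari{u(c)} = j-1 = \vari{Xu(c)}$ while the first nonzero entry of $u(c)$ is negative and that of $Xu(c)$ is positive (any earlier nonzero row would either match and still mismatch, or create an extra sign change contradicting the variation count), violating clause~ii. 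Either way $X$ is not \ovd{k-1}.

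The main obstacle is the strict combinatorial core of the forward direction together with its limiting step: establishing that the two-sided smoothing produces a matrix whose minors are strictly positive exactly up to the (preserved) rank, and that the $S$-bounds — which are only lower semicontinuous — can be pushed through the limit. Getting the order of the two sub-arguments right (rank bound before the variation bound, so that the needed $(s{+}1)$-minors are genuinely positive) and tracking the signs $\prod_q \delta_q$ carefully enough to deliver clause~ii is where the real work lies.
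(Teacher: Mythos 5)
Your argument is correct, but it takes a genuinely different route from the paper's. The paper disposes of both directions essentially by citation: the forward implication is exactly \cite[Theorem~5.2.4]{karlin1968total}, and the converse is reduced --- by restricting the input to $k$ columns at a time and recursing on rank-deficient column subsets --- to \cite[Theorem~5.1.5]{karlin1968total} (a full-column-rank $n\times k$ variation-diminishing matrix is $\tp{k}$). You instead reprove the forward direction from scratch (regularization by strictly totally positive smoothers plus the classical alternating-null-vector and Cauchy--Binet block arguments) and replace the converse by a direct contrapositive construction: take a minimal-order negative minor $M$, solve $Mc=(1,-1,\dots)^\transp$, and use Cramer's rule with the $(j{-}1)$-positive cofactors to exhibit an input violating either the variation bound or the sign-order clause. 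Your converse is, if anything, cleaner than the paper's: invertibility of $M$ comes free from $\det M<0$, no rank case analysis is needed, and the argument pinpoints which clause of \ovd{k-1} fails. The cost of self-containedness sits in the forward direction, where two points need one more line each: (a) strict positivity of the $r$-minors of $X_\varepsilon$ only holds for $r\le\min\{k,\rk(X)\}$, so your rank-bound step (which wants strictly positive $\rk(X)$-minors) applies only when $\rk(X)\le k$ --- in the complementary case that bound is subsumed by $\vari{X_\varepsilon u}\le\vari{u}\le k-1$, and the variation step still has its strictly positive $(s{+}1)$-minors because $s+1\le k\le\rk(X)$; (b) clause~ii is only gestured at, both for $X_\varepsilon$ and in the $\varepsilon\to0$ limit, where one should add that an earlier nonzero entry of $X_\varepsilon u$ of the wrong sign would create an extra sign change against the persisting entries of $Xu$. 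Both are routine to fill in and do not affect the soundness of the approach; what your version buys is a proof that does not lean on Karlin's theorems as black boxes, at the price of roughly a page of classical determinantal bookkeeping that the paper avoids.
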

The reader will notice that the number of $k$ minors of $X$ is combinatorial. Fortunately, it often suffices to only consider consecutive minors \cite{karlin1968total,fallat2017total}.
\begin{prop}
    \label{prop:consecutive}
	If $X \in \Rmn$, $k \leq \min \{n,m\}$ is such that all consecutive $j$-minors of $X$ are positive, $1 \leq j \leq k-1$ and all consecutive $k$-minors of $X$ are nonnegative (positive). Then, $X$ is (strictly) \tp{k}.
\end{prop}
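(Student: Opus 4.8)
The plan is to prove the stronger \emph{strict} statement first---if every consecutive $j$-minor of $X$ is positive for $1 \le j \le k$, then \emph{every} $j$-minor is positive for $1 \le j \le k$---and then to recover the nonnegative statement by relaxing the base inequalities at order $k$. Throughout, for an index set $I = \{i_1 < \dots < i_r\}$ I measure how far it is from being an interval by its \emph{dispersion} $d(I) := i_r - i_1 - (r-1) \ge 0$, so that $d(I) = 0$ exactly when $I$ is an interval; a minor $\det(X_{I,J})$ is consecutive precisely when $d(I) = d(J) = 0$. Since transposition maps consecutive minors to consecutive minors (and leaves every minor unchanged) and preserves $\tp{k}$, both the hypothesis and the conclusion are transpose-invariant, so I may treat row gaps and column gaps symmetrically.

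I would then argue by a double induction: a primary induction on the order $r = 1, \dots, k$, and, for each fixed $r$, a secondary induction on the total dispersion $D(I,J) := d(I) + d(J)$ of the minor under consideration. The order $r = 1$ is vacuous because every $1$-minor is an entry and hence consecutive, and the secondary base case $D = 0$ is exactly a consecutive minor, positive by hypothesis. For the inductive step I peel one gap off the minor using the Desnanot--Jacobi identity \cref{eq:Sylvester_id}, applied to the submatrix $X_{I,J}$ in its form with an arbitrary common consecutive core: this writes $\det(X_{I,J})$ as a Schur-complement-type quotient whose denominator is a lower-order minor---positive by the primary induction hypothesis, hence nonzero---and whose numerator is built from minors of strictly smaller dispersion, controlled by the secondary hypothesis. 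Iterating this single-gap reduction drives every minor down to the consecutive ones.

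The main obstacle is sign control: \cref{eq:Sylvester_id} presents $\det(X_{I,J})$ as a \emph{difference} of products, so positivity of the constituent minors does not by itself fix the sign. The resolution, which is the heart of the argument, is to perform the gap-filling as a genuine monotonicity/telescoping step rather than a blind substitution. The prototype is the order-$2$ case, where positivity of $\det(X_{\{i_1,i_2\},\{j_1,j_2\}})$ is equivalent to strict monotonicity in $i$ of the ratio $x_{i,j_1}/x_{i,j_2} = \prod_{l=j_1}^{j_2-1} x_{i,l}/x_{i,l+1}$, which holds because each factor is a strictly monotone ratio governed by a consecutive $2$-minor and a product of positive monotone factors is monotone; here positivity of the entries (the order-$1$ minors) is exactly what lets the ratios telescope across the gaps. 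For general $r$ the same mechanism is carried by Sylvester's identity, with consecutive minors of order $r-1$ playing the role of the entries and the strict positivity of \emph{all} lower-order minors (the primary induction hypothesis) guaranteeing that no quotient degenerates; equivalently, one may invoke the three-term Pl\"ucker relation, whose $+$ sign furnishes the required positivity once the widest minor is reached by telescoping from an interior border.

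Finally, for the nonnegative conclusion I first apply the strict result at orders $1 \le j \le k-1$ to obtain that all minors of order $\le k-1$ are strictly positive, and then run the order-$k$ dispersion induction with the base inequalities relaxed from $>$ to $\ge$: every denominator appearing in the reduction is a minor of order $\le k-1$ and is therefore strictly positive, so the telescoping identities remain valid and propagate nonnegativity from the consecutive $k$-minors to all $k$-minors. (Alternatively, one perturbs $X$ so that all consecutive minors of order $\le k$ become strictly positive, applies the strict case, and passes to the limit.) Combining the two ranges, all $j$-minors for $1 \le j \le k$ are nonnegative, i.e.\ $X$ is $\tp{k}$ in the sense of \cref{def:sc}; in the strict case they are all positive, i.e.\ $X$ is strictly $\tp{k}$.
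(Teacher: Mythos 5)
The paper does not actually prove this proposition; it is quoted as a known result (Fekete's criterion) from Karlin and Fallat--Johnson, so there is no internal proof to compare against. Your overall strategy --- prove the strict case by a double induction on the order $r$ and on the dispersion $D(I,J)$, reducing every minor to consecutive ones via determinantal identities --- is indeed the classical route, and your order-$2$ telescoping argument is complete and correct. But the decisive step is missing for general $r$. Applying \cref{eq:Sylvester_id} to $X_{I,J}$ only lowers the \emph{order} of the minors on the right-hand side (they are $(r-1)$- and $(r-2)$-minors, already positive by your primary induction), so it does not engage the dispersion at all, and it expresses the target as a \emph{difference} of positive products, which is exactly the sign problem you flag. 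The identity that actually closes the induction is the three-term Grassmann--Pl\"ucker relation $[j_1j_2][j_3j_4]-[j_1j_3][j_2j_4]+[j_1j_4][j_2j_3]=0$ on a common $(r-2)$-column core, rearranged so that the ``crossing'' product $[j_1j_3][j_2j_4]$ (target times a smaller-dispersion $r$-minor) equals a \emph{sum} of two nonnegative products. You name this relation in passing, but setting it up --- choosing the core and the four border columns so that the companion factor $[j_2j_4]$ genuinely has smaller dispersion, and tracking the sign of each bracket when the extra columns interleave with the core --- is the entire content of Fekete's lemma, and none of it is carried out. As written, the generalization from $r=2$ rests on an analogy, not a proof.

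The nonnegative-at-order-$k$ case has a second, independent gap. Your first route (``rerun the order-$k$ dispersion induction with $\ge$'') fails because the factor multiplying the target in the Pl\"ucker step is itself an order-$k$ minor, not an order-$(k-1)$ one; under the relaxed hypothesis it is only known to be nonnegative, may vanish, and then cannot be divided out --- your claim that ``every denominator appearing in the reduction is a minor of order $\le k-1$'' is not true for the identity that controls dispersion. Your second route (perturb so that all consecutive minors up to order $k$ become strictly positive, then pass to the limit) is circular as stated: a generic perturbation can make a vanishing consecutive $k$-minor negative, and exhibiting a perturbation direction that is compatible with the hypothesis essentially requires already knowing the matrix is $\tp{k}$. (The paper faces the same issue for Hankel matrices and resolves it with a \emph{structured} perturbation, the strictly Hankel totally positive system $G_k(z)$ of Lemma~\ref{lem:intercon}; no analogous canonical direction is available for an arbitrary matrix.) Handling the degenerate case where a smaller-dispersion $k$-minor vanishes requires an additional argument (e.g.\ rank/shadow considerations for matrices that are strictly $\tp{k-1}$), which is absent from the proposal.
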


\section{Hankel $k$-positivity}
\label{sec:hankel}
In this section, we provide a characterization of Hankel \ovd{k-1} systems for arbitrary $k$. We begin by stating our main results based on the following definition.
\begin{defn}[Hankel $k$-positivity] \label{def:hankel_kpos}
$G(z)$ is called \emph{Hankel (strictly) $\tp{k}$} if the Hankel matrix $H_g(1,N)$ is (strictly) $\tp{N}$ for all $N \geq k$. We say that $G(z)$ is \emph{Hankel (strictly) totally positive} if $k = \infty$.   
\end{defn}
 Our main tool for the analysis is the concept of \emph{$j$-th compound system}.  
\begin{defn}[Compound Systems]
	For $G(z)$ and $j \in \mathds{Z}_{\geq 1}$, we define the $j$-th compound system $G_{[j]}(z)$ by the impulse response $g_{[j]}(t) := \det(H_g(t,j))$, $t \geq 1$.
\end{defn}
Our first  main result is a characterization of Hankel \ovd{k} in terms of Hankel $k$-positivity through the compound systems.
\begin{thm} \label{prop:con_minor}
	If $G(z)$ is an $n$-th order system with $k \leq n$, then the following are equivalent:
		\begin{enumerate}[i.]
		\item $G(z)$ is Hankel \ovd{k-1}. \label{item:Hankel_vardim}  
		\item $G(z)$ is Hankel $\tp{k}$. 
		\item The compound systems $G_{[j]}(z)$ are externally positive for $1 \leq j \leq k$. \label{item:Hankel_comp}
		\item $H_g(1,k-1) \succ 0$, $H_g(2,k-1) \succeq 0$ and $G_{[k]}(z)$ is externally positive. 
		\item $G_{[j]}$ is Hankel $\tp{k-j+1}$ for $1\leq j \leq k$.
	\end{enumerate} 
\end{thm}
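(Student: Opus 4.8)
The plan is to route every equivalence through one structural observation: each compound system is itself an LTI system whose impulse response collects the \emph{consecutive} minors of the Hankel matrix. Substituting the factorization \cref{eq:Hankel_mat_real} into $g_{[j]}(t)=\det(H_g(t,j))$ and applying the multiplicativity property in \cref{lem:compound_mat} gives
\begin{equation*}
g_{[j]}(t) = C_{[j]}(\Obs{j}(A,c))\, C_{[j]}(A)^{t-1}\, C_{[j]}(\Con{j}(A,b)),
\end{equation*}
so that $(C_{[j]}(A),\,C_{[j]}(\Con{j}(A,b)),\,C_{[j]}(\Obs{j}(A,c)))$ is a state-space realization of $G_{[j]}(z)$. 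Hence, by the \ovd{0} theory, external positivity of $G_{[j]}$ is nothing but $g_{[j]}(t)=\det(H_g(t,j))\geq 0$ for all $t\geq 1$; and since the submatrices $H_g(t,j)$ run exactly through the consecutive $j$-minors of the infinite Hankel matrix as $t$ varies, condition iii is equivalent to the statement that all consecutive minors of orders $1,\dots,k$ are nonnegative.

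With this dictionary in hand I would first dispatch i $\Leftrightarrow$ ii. Each truncation $H_g(1,N)$ is square, so \cref{prop:SR} identifies $\tp{N}$-ness up to order $k$ of $H_g(1,N)$ with its \ovd{k-1} property (the sign-order clause being bundled into \cref{prop:SR}); letting $N\to\infty$ via $\Hank{g}u=\lim_j \Hank{g}^j u$ transfers this to the operator and, through \cref{def:hankel_kpos}, yields i $\Leftrightarrow$ ii. The implication ii $\Rightarrow$ iii is immediate, since consecutive minors form a subfamily of all minors. For the converse iii $\Rightarrow$ ii I would invoke \cref{prop:consecutive}, which upgrades nonnegative consecutive minors to genuine $\tp{k}$-ness; the delicate point is that \cref{prop:consecutive} requires the lower-order consecutive minors to be strictly positive, so the non-strict case must be recovered by perturbing $g$ (or exploiting the minimality rank bound $\rk(H_g(1,j))=j$ for $j\leq n$) followed by a continuity argument.

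The heart of the remaining equivalences is the three-term identity obtained by applying the Desnanot--Jacobi relation \cref{eq:Sylvester_id} to $H_g(t,j)$. Reading off the four corner blocks and the inner block as shifted Hankel determinants gives
\begin{equation*}
g_{[j]}(t)\,g_{[j-2]}(t+2) = g_{[j-1]}(t)\,g_{[j-1]}(t+2) - g_{[j-1]}(t+1)^2,
\end{equation*}
which says that the sign of $g_{[j]}$ is governed by the log-convexity of $g_{[j-1]}$. This recursion is what collapses the lower compound conditions: by Sylvester's criterion \cref{eq:psd}, $H_g(1,k-1)\succ 0$ is exactly $g_{[j]}(1)>0$ for $j\leq k-1$, and together with $H_g(2,k-1)\succeq 0$ it plays the role of the conditions in \cref{prop:relax}, forcing $G_{[j]}$ to be externally positive for every $j\leq k-1$; adjoining external positivity of the top compound $G_{[k]}$ then recovers condition iii, giving iv $\Leftrightarrow$ iii. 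For condition v I would use the general Sylvester determinant identity, of which \cref{eq:Sylvester_id} is the simplest case, to write each consecutive minor of the compound Hankel operator $H_{g_{[j]}}$ as a consecutive minor of $H_g$ of order $j+(k-j+1)-1=k$; the bookkeeping then shows $G_{[j]}$ is Hankel $\tp{(k-j+1)}$ precisely when $G$ is Hankel $\tp{k}$, i.e. v $\Leftrightarrow$ ii.

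I expect the main obstacle to be the two places where merely nonnegative data must be promoted to the hypotheses of the strict results: the passage iii $\Rightarrow$ ii through \cref{prop:consecutive}, and the reduction iv $\Rightarrow$ iii, where only $H_g(1,k-1)\succ 0$ and $H_g(2,k-1)\succeq 0$ are available to certify external positivity of all $k-1$ lower compounds. Both hinge on showing that positive-definiteness of the leading $(k-1)$-block, semidefiniteness of its shift, and finiteness of the rank propagate to nonnegativity of \emph{all} lower consecutive minors for every $t$ --- a truncated-moment-problem phenomenon for finite-rank Hankel operators. Making this propagation rigorous, rather than the largely mechanical compound-realization and Desnanot--Jacobi computations, is where the real work lies.
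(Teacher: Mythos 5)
Your overall architecture coincides with the paper's: the compound realization, \cref{prop:SR} plus truncation for i\,$\Leftrightarrow$\,ii, \cref{prop:consecutive} with a perturbation/continuity argument (the paper's \cref{lem:intercon}) for ii\,$\Leftrightarrow$\,iii, and the Desnanot--Jacobi recursion $g_{[j]}(t)\,g_{[j-2]}(t+2)=g_{[j-1]}(t)\,g_{[j-1]}(t+2)-g_{[j-1]}(t+1)^2$ as the engine for the rest. The genuine gap is iv\,$\Rightarrow$\,iii, which you assert rather than prove, and whose stated justification does not hold: \cref{prop:relax} characterizes Hankel \emph{total} positivity via the full $n\times n$ matrices $H_g(1,n)$ and $H_g(2,n)$, so the truncated conditions $H_g(1,k-1)\succ 0$ and $H_g(2,k-1)\succeq 0$ cannot ``play the role'' of its hypotheses when $k-1<n$. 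The paper closes this implication by reading the recursion in the other direction, $g_{[j-1]}(t)\,g_{[j-1]}(t+2)=g_{[j-1]}(t+1)^2+g_{[j-2]}(t+2)\,g_{[j]}(t)$, and running an induction: global positivity of the top compound $g_{[k]}$ together with strict positivity of each $g_{[j]}$, $j\le k-1$, on an initial segment (supplied by $H_g(1,k-1)\succ 0$ via \cref{eq:psd}) propagates positivity of $g_{[j]}$ to a segment of roughly twice the length, and iterating gives $g_{[j]}(t)>0$ for all $t\ge 1$. You explicitly defer exactly this propagation as ``where the real work lies''; it is the entire content of the implication, and it is an elementary interval-doubling induction, not a truncated-moment-problem phenomenon.

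A second, more local error sits in your route to item v. Sylvester's determinant identity does not identify a consecutive $(k-j+1)$-minor of $H_{g_{[j]}}$ with a single consecutive $k$-minor of $H_g$; it produces a \emph{product} of minors of different orders. For instance, applying \cref{eq:Sylvester_id} to $H_g(t,3)$ gives $\det(H_{g_{[2]}}(t,2))=g_{[3]}(t)\,g(t+2)$, so extracting sign information from your formula would again require positivity of the lower-order cofactor minors --- the very thing under proof. The paper avoids this circularity: in the strict case $H_g(t,k)\succ 0$ for all $t$, hence $\compound{H_g(t,k)}{j}\succ 0$ by \cref{lem:compound_mat}, and the principal submatrix of this compound matrix indexed by consecutive $j$-tuples is exactly $H_{g_{[j]}}(t,k-j+1)$, so positive definiteness --- and with it Hankel $\tp{(k-j+1)}$-ness of $G_{[j]}$ --- is inherited directly, with the non-strict case recovered by the \cref{lem:intercon} perturbation. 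You would need either to adopt that argument or to supply the missing positivity of the cofactors in your product identity.
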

The distinction between \ovd{k-1} and $\tp{k}$ emphasizes that the former is an input-output property, while the latter is a matrix property. Our analysis of the compound system properties yields the second main result: a necessary frequency domain characterization in terms of dominant approximations. 
\begin{thm}\label{thm:decomp_tpk}
	Let $G(z) =  \sum_{i=1}^{n} \frac{r_i}{z-p_i}$ have distinct poles and $2 \leq k \leq n$. If $G(z)$ is Hankel $\tp{k}$, then $G(z) = \frac{r_{1}}{z-p_1} + G_r(z)$, where $G_r(z)$ is Hankel $\tp{k-1}$ with $r_{1} > 0$.
\end{thm}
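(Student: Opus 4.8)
The plan is to work entirely at the level of the compound systems, using the equivalence in \cref{prop:con_minor} between Hankel $k$-positivity and external positivity of the compounds $G_{[1]},\dots,G_{[k]}$. By that equivalence it suffices to prove (a) $r_1>0$, and (b) that the reduced system $G_r=G-\frac{r_1}{z-p_1}$ has $(G_r)_{[j]}$ externally positive for $1\le j\le k-1$; applying \cref{prop:con_minor} to $G_r$ then turns this into Hankel $(k-1)$-positivity of $G_r$. I would first record the modal form of the compounds. Writing $g(t)=\sum_{i=1}^n r_ip_i^{t-1}$, the Hankel matrix factors as $H_g(t,j)=V_j\,\diag(r_ip_i^{t-1})\,V_j^{\transp}$ with the generalized Vandermonde matrix $(V_j)_{a,i}=p_i^{a-1}$, so the Cauchy--Binet formula \cref{eq:CB} gives
\[
g_{[j]}(t)=\sum_{I\in\mathcal{I}_{n,j}} c_I\Big(\prod_{i\in I}r_i\Big)\Big(\prod_{i\in I}p_i\Big)^{t-1},\qquad c_I=\det\big((V_j)_{\cdot,I}\big)^2>0,
\]
the positivity of $c_I$ following from the distinctness of the poles. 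Hence $G_{[j]}$ has poles $\prod_{i\in I}p_i$ and residues $c_I\prod_{i\in I}r_i$, and $(G_r)_{[j]}$ is the same sum restricted to those $I$ with $1\notin I$.

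For (a) and the accompanying pole/residue structure I would apply \cref{prop:dominantpole} to each externally positive $G_{[j]}$: its dominant pole $p_1\cdots p_j$ is real and nonnegative with positive residue $c_{(1,\dots,j)}\prod_{i\le j}r_i$. Since $c_{(1,\dots,j)}>0$, this forces $\prod_{i\le j}r_i>0$ and $\prod_{i\le j}p_i\ge0$ for all $1\le j\le k$; an induction on $j$ then yields $r_1,\dots,r_k>0$ together with the fact that $p_1>p_2>\cdots$ are real and positive among the top $k$ poles. Taking $j=1$ gives $r_1>0$ and identifies $\frac{r_1}{z-p_1}$ as the genuine dominant lag.

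The crux is (b). Since $g(t)=r_1p_1^{t-1}+g_r(t)$, the Hankel matrices satisfy the rank-one relation $H_g(t,N)=H_{g_r}(t,N)+r_1p_1^{t-1}v_Nv_N^{\transp}$ with $v_N=(1,p_1,\dots,p_1^{N-1})^{\transp}$. By \cref{prop:consecutive} it suffices to control the consecutive minors of $H_{g_r}(1,N)$, that is the values $\det H_{g_r}(t,m)=(g_r)_{[m]}(t)$ for $1\le m\le k-1$ and all $t\ge1$. I would expand each such determinant through the matrix-determinant (bordering) identity to write $(g_r)_{[m]}(t)$ in terms of minors of $H_g$, and transfer nonnegativity from the externally positive $G_{[m]}$ and $G_{[m+1]}$. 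A key auxiliary fact is that the Desnanot--Jacobi identity \cref{eq:Sylvester_id}, applied to $H_g(t,m+1)$, gives
\[
g_{[m]}(t)\,g_{[m]}(t+2)-g_{[m]}(t+1)^2=g_{[m+1]}(t)\,g_{[m-1]}(t+2)\ge0,
\]
so that every compound impulse response $g_{[m]}$ is log-convex; combined with its positive dominant term this forces $g_{[m]}(t)>0$ throughout and drives an induction on $k$.

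The main obstacle is exactly this sign transfer in (b): peeling the single dominant pole $p_1$ from $G$ removes the entire family of index-$1$ poles $\{\,p_1\prod_{i\in J}p_i\,\}$ from each $j$-th compound, not merely its dominant mode, so there is no one-line determinantal identity equating $(G_r)_{[m]}$ with $G_{[m]}$ minus a single term. Establishing $(g_r)_{[m]}(t)\ge0$ therefore requires combining the external positivity of $G_{[m]}$ and $G_{[m+1]}$ with the log-convexity above in a carefully ordered induction (equivalently, re-expressing the removal of $p_1$ through the mode-annihilating operator $E-p_1$ and verifying that it preserves the relevant minor signs). I expect this inductive sign bookkeeping, rather than any single algebraic identity, to be the hard part, after which invoking \cref{prop:con_minor} on $G_r$ closes the argument.
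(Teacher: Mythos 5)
Your setup and part (a) are fine: the modal expression for $g_{[j]}$ is exactly \cref{thm:tpk_poles_coeff}, and applying \cref{prop:dominantpole} to the externally positive compounds does yield $r_1,\dots,r_k>0$ and nonnegative dominant poles; the log-convexity identity you extract from \cref{eq:Sylvester_id} is also correct. The problem is part (b), which is the entire content of the theorem beyond \cref{thm:tpk_poles_coeff}: you never establish $(g_r)_{[m]}(t)\ge 0$ for $1\le m\le k-1$. You correctly diagnose why no single determinantal identity relates $(G_r)_{[m]}$ to $G_{[m]}$, and you then state that the required sign transfer needs ``a carefully ordered induction''---but that sentence is a placeholder for the proof, not a proof. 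Note also that the modal expansion cannot be used directly here: the residues $c_I\prod_{i\in I}r_i$ of $(G_r)_{[m]}$ with $1\notin I$ carry no sign constraint for indices $i>k$ (and $c_I>0$ itself fails for complex conjugate pole pairs), so external positivity of $(G_r)_{[m]}$ cannot be read off from the sum over $I\not\ni 1$.

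The missing mechanism is precisely the one you gesture at in your parenthetical about the mode-annihilating operator. The paper first normalizes $p_1=1$ (multiplication of $g$ by $p_1^t$ preserves Hankel $k$-positivity by \cref{lem:intercon}), so that $-\Delta^{(1)}$ kills the dominant mode and $-\Delta^{(1)}g=-\Delta^{(1)}g_r$; \cref{lem:diff_Hank} then reduces the claim to showing that $-\Delta^{(1)}g$ generates a Hankel $\tp{k-1}$ system. That positivity is obtained in three concrete steps: (i) a row-subtraction identity $\det H_g(t,j)=\det\tilde H_g(t,j)$ expressing each minor of $H_g$ through a matrix bordered by one row of $g$ and built otherwise from $-\Delta^{(1)}g$; (ii) an application of \cref{eq:Sylvester_id} to $\tilde H_g(t,j)$ giving
\begin{equation*}
\det H_g(t+2,j-1)\,\det H_{-\Delta^{(1)}g}(t,j-1)>\det H_g(t+1,j-1)\,\det H_{-\Delta^{(1)}g}(t+1,j-1),
\end{equation*}
which, since $\det H_g(\cdot,j-1)>0$, forces $t\mapsto\det H_{-\Delta^{(1)}g}(t,j-1)$ to change sign at most once and only from positive to negative; and (iii) the asymptotic dominance of $\sum_{i=2}^{j}\frac{r_i}{z-p_i}$, whose differenced Hankel matrices are positive definite by \cref{thm:tpk_poles_coeff} and \cref{lem:diff_Hank}, which rules out the eventual negative sign and hence forces positivity for all $t$. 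Steps (i)--(iii) constitute the actual argument and none of them appear in your proposal, so the proof has a genuine gap at its central step.
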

By applying Theorem~\ref{thm:decomp_tpk} recursively to $G_r(z)$, it follows that a Hankel $\tp{k}$ system has the decomposition 
\begin{equation*}
    G(z) = \sum_{i=1}^{k-1} \frac{r_i}{z-p_i} + G_r(z)
\end{equation*}
with externally positive $G_r(z)$. By Propositions~\ref{prop:dominantpole} and \ref{prop:toep_inf_pos},  the dominant approximation $\sum_{i=1}^{k} \frac{r_i}{z-p_i}$ is Hankel totally positive, which demonstrates the bridge between the extreme cases $k=1$ and $k=\infty$: Hankel $\tp{k}$ systems have a $k$-th order dominant approximation that is totally positive.

\subsection{Hankel total positivity theory}
This subsection is devoted to the proof of Theorem~\ref{prop:con_minor}.
\begin{lem}\label{prop:op_seq}
	For $G(z)$ and $k \in \mathbb{Z}_{\geq 1}$, the following are equivalent:
	\begin{enumerate}[i)]
		\item $G(z)$ is Hankel \ovd{k-1}.  \label{item:op_var_dim}
		\item $\forall j \in \mathds{Z}_{\geq 0}: \; \Hank{g}^ju$ is \ovd{k-1} for  all $u$ such that $\Hank{g}^ju \in \linf$ \label{item:sequence_var_dim}
	\end{enumerate}
\end{lem}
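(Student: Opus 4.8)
The plan is to exploit the truncation relation $\Hank{g}u = \lim_{j\to\infty}\Hank{g}^j u$ together with two elementary properties of the sign-variation functional $\vari{\cdot}$. First, for a scalar sequence $w$ on $\mathds{Z}_{\ge 0}$ the snapshot variation is monotone, $\vari{w(0:m)} \le \vari{w(0:m+1)}$, so that $\vari{w} = \sup_{m}\vari{w(0:m)} = \lim_{m\to\infty}\vari{w(0:m)}$; in particular $\vari{w(0:m)} \le \vari{w}$ for every $m$. Second, $\vari{\cdot}$ is lower semicontinuous under entrywise convergence: if $w^{(j)} \to w$ pointwise then $\vari{w} \le \liminf_j \vari{w^{(j)}}$, because every sign change of $w$ is witnessed by two entries of strictly opposite sign whose signs persist in $w^{(j)}$ for $j$ large. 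Throughout I read the Hankel input in the order $u(-1),u(-2),\dots$ matching $u(-1:-j)$, so that $\vari{u}$ and the sign of the first nonzero of $u$ agree with those of the finite snapshots $v^{(j)} := u(-1:-j)$.

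For \ref{item:op_var_dim}$\Rightarrow$\ref{item:sequence_var_dim}, I would fix $v \in \mathbb{R}^j$ with $\vari{v} \le k-1$ and extend it to a finitely supported past input $u$ via $u(-i) = v_i$ for $i \le j$ and $u(-i)=0$ otherwise, so that $\vari{u} = \vari{v}$, $y := \Hank{g}u \in \linf$, and $y(0:j-1) = H_g(1,j)v = \Hank{g}^j u$. Snapshot monotonicity gives $\vari{H_g(1,j)v} = \vari{y(0:j-1)} \le \vari{y}$, and \ref{item:op_var_dim} bounds $\vari{y} \le \min\{\rk(\Hank{g})-1,\vari{u}\} = \min\{n-1,\vari{v}\}$; since $H_g(1,j)v \in \mathbb{R}^j$ and $\rk(H_g(1,j)) = \min\{j,n\}$ this yields $\vari{H_g(1,j)v}\le\min\{\rk(H_g(1,j))-1,\vari{v}\}$. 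For the converse, given a past input $u$ with $\vari{u}\le k-1$ and $y=\Hank{g}u\in\linf$, I set $v^{(j)}:=u(-1:-j)$ and $y^{(j)}:=\Hank{g}^j u = H_g(1,j)v^{(j)} \to y$ entrywise. Because $\vari{v^{(j)}} \le \vari{u}$, hypothesis \ref{item:sequence_var_dim} gives $\vari{y^{(j)}} \le \min\{\rk(H_g(1,j))-1,\vari{v^{(j)}}\} \le \min\{n-1,\vari{u}\}$ for every $j$, and lower semicontinuity passes this to the limit, $\vari{y} \le \liminf_j \vari{y^{(j)}} \le \min\{\rk(\Hank{g})-1,\vari{u}\}$.

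It remains to transfer the order-preserving (sign) part of \ovd{k-1}, which I expect to be the main obstacle, since zeros of the limit sequence may appear as small spurious nonzeros in the truncations and so disturb the identification of the first nonzero entry. In the direction \ref{item:sequence_var_dim}$\Rightarrow$\ref{item:op_var_dim}, assume $\vari{y}=\vari{u}=:s$ and pick a finite witness $t_0<\dots<t_s$ of the $s$ strict sign changes of $y$ with $t_0$ its first nonzero index; then for all large $j$ one has $\vari{y^{(j)}}=s$ (the witnesses force $\ge s$, the bound forces $\le s$) and likewise $\vari{v^{(j)}}=s$, so the matrix order-preservation of \ref{item:sequence_var_dim} applies. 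The key point is that any spurious nonzero of $y^{(j)}$ at an index below $t_0+1$ cannot create an extra sign change without violating $\vari{y^{(j)}}=s$, hence it must share the sign of $y^{(j)}_{t_0+1}$, which converges to $y(t_0)$; thus the first nonzero of $y^{(j)}$ carries the sign of the first nonzero of $y$, and similarly for $v^{(j)}$ and $u$. Chaining the matrix sign identity with these two limits gives the claim, and the direction \ref{item:op_var_dim}$\Rightarrow$\ref{item:sequence_var_dim} follows by the same bookkeeping applied to the snapshot $y(0:j-1)$, with the degenerate case $H_g(1,j)v=0$ being vacuous.
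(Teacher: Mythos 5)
Your proof is correct and follows essentially the same route as the paper's: finite truncation, monotonicity of the snapshot variation, and lower semicontinuity of $\vari{\cdot}$ under entrywise convergence (the paper's Lemma~\ref{lem:seq}). You are in fact more explicit than the paper about the order-preserving (sign) clause, which the paper's proof leaves implicit, and your handling of the spurious small entries before the first genuine nonzero of the limit is sound.
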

Note that $\Hank{g}^ju$ can be finitely represented by $H_g(1,j)$, which by its repetitive structure is $\tp{k}$ if and only if $H_g(1,j)_{(1:j,1:\min\{j,k\})}$ is $\tp{k}$. Proposition~\ref{prop:SR} yields then the equivalence between the first and second item in Theorem~\ref{prop:con_minor}. Further, since all $i$-minors of $H_g(1,j)$ are symmetric, it follows from (\ref{eq:psd}) that $G(z)$ is strictly Hankel $\tp{k}$ if and only if all $i$-minors of $H_g(1,j)$, $1\leq i \leq k$, are positive definite for all $j \geq k$. As shown in \cite{fallat2017total}, the set of strictly Hankel $\tp{k}$ systems then inherits the properties of positive definite matrices. 
\begin{lem}\label{lem:intercon}
For $k \in \mathds{Z}_{>0}$, let $\mathcal{(S)HP}_k$ denote the set of all (strictly) Hankel $\tp{k}$ systems. Then, the following hold:
\begin{enumerate}[i.]
    \item $\mathcal{HP}_k$ is a proper convex cone. 
    \item If $G_1(z) \in \mathcal{SHP}_{k_1}$ and $G_2(z) \in \mathcal{(S)HP}_{k_2}$, then \begin{enumerate}
        \item $G_1(z)+G_2(z) \in \mathcal{SHP}_{\min \{ k_1,k_2 \}}$.
        \item $G_P(z) \in \mathcal{(S)HP}_{\min \{ k_1,k_2 \}}$ with $g_p = g_1g_2$.
    \end{enumerate} 
    \item $\mathcal{SHP}_{k}$ is dense in $\mathcal{HP}_{k}$. 
    \item $G_k(z) = \sum^{k}_{i =1}\frac{k_i}{z-p_i} \in \mathcal{SHP}_{k} \cap \mathcal{HP}_{\infty}$ if all $k_i,p_i > 0$ and $p_i$ are distinct. \end{enumerate}
\end{lem}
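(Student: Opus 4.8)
The plan is to route every claim through the positive-(semi)definiteness reformulation recorded just above the statement. Recall that $G\in\mathcal{SHP}_k$ if and only if $H_g(t,k)\succ 0$ for all $t\geq 1$ (apply \eqref{eq:psd} to the symmetric blocks $H_g(t,k)$, whose leading principal minors are the consecutive minors $\det(H_g(t,j))$), whereas $G\in\mathcal{HP}_k$ only forces $H_g(t,k)\succeq 0$ for all $t$, since every principal minor of $H_g(t,k)$ is a minor of order at most $k$ and hence nonnegative. The two structural identities driving everything are the linearity of the Hankel map, $H_{\alpha g_1+\beta g_2}(t,j)=\alpha H_{g_1}(t,j)+\beta H_{g_2}(t,j)$, and the fact that pointwise multiplication of impulse responses is the Hadamard product, $H_{g_1g_2}(t,j)=H_{g_1}(t,j)\circ H_{g_2}(t,j)$.

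I would establish part ii and part iv first, since the rest builds on them. For ii.a with $k=\min\{k_1,k_2\}$, the block $H_{g_1+g_2}(t,k)=H_{g_1}(t,k)+H_{g_2}(t,k)$ is a sum of a positive definite and a positive semidefinite matrix, hence positive definite, so $G_1+G_2\in\mathcal{SHP}_k$. For the strict instance of ii.b (same $k$), the Schur product theorem \cite{horn2012matrix} gives $H_{g_1}(t,k)\circ H_{g_2}(t,k)\succ 0$, so $G_P\in\mathcal{SHP}_k$. For part iv, writing $g(t)=\sum_{i=1}^k k_i p_i^{t-1}$, the Hankel block factors as $H_g(t,N)=V\diag(d)V^\transp$ with $d_i=k_ip_i^{t-1}>0$ and $V$ the $N\times k$ Vandermonde matrix in the distinct positive nodes $p_1,\dots,p_k$. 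For $N=k$ this $V$ is square and invertible, so $H_g(t,k)\succ0$ and $G_k\in\mathcal{SHP}_k$; for arbitrary $N$ and any $r$-minor, Cauchy--Binet \eqref{eq:CB} yields $\det\big((V\diag(d)V^\transp)_{I,J}\big)=\sum_{K\in\mathcal{I}_{k,r}}\det(V_{I,K})\det(V_{J,K})\prod_{i\in K}d_i\geq0$, because a Vandermonde matrix with $0<p_1<\dots<p_k$ is totally positive \cite{karlin1968total}; hence $G_k\in\mathcal{HP}_\infty$.

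Density (part iii) would then follow by perturbation: for $G\in\mathcal{HP}_k$ the system $G+\epsilon G_k$ satisfies $H_{g+\epsilon g_k}(t,k)=H_g(t,k)+\epsilon H_{g_k}(t,k)\succ0$ (a positive semidefinite plus a positive definite term), so it lies in $\mathcal{SHP}_k$ and converges to $G$ entrywise as $\epsilon\downarrow0$. This also closes the nonstrict case of ii.b, by approximating $G_2\in\mathcal{HP}_{k_2}$ with strict systems, applying the strict product law, and using that $\mathcal{HP}_{\min}$ is closed. Finally I would assemble part i: $\mathcal{HP}_k$ is a cone because scaling $g$ by $\mu>0$ multiplies each $j$-minor by $\mu^j>0$; it is closed since each minor is a continuous function of the impulse response; it is pointed because $G,-G\in\mathcal{HP}_k$ forces the $1$-minors $\pm g(t)\geq0$, i.e.\ $G=0$; it is convex because $\mathcal{SHP}_k$ is convex (a convex combination of positive definite blocks with positive weights stays positive definite) and $\mathcal{HP}_k=\cl(\mathcal{SHP}_k)$ by density and closedness; and it is solid since $\mathcal{SHP}_k$ is a nonempty open subset of $\mathcal{HP}_k$, giving $\mathcal{SHP}_k\subseteq\inter\mathcal{HP}_k\neq\emptyset$.

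The main obstacle is the strict/nonstrict bookkeeping: positive definiteness characterizes $\mathcal{SHP}_k$ exactly, but positive semidefiniteness is merely necessary for $\mathcal{HP}_k$ (the converse ``positive semidefinite $\Rightarrow$ $\tp{k}$'' fails in general), so neither the nonstrict product law nor the convexity of $\mathcal{HP}_k$ can be read off from a definiteness statement; both must be obtained through density together with closedness. A secondary difficulty is part iv for orders $N>k$, where $H_g(t,N)$ is only rank-$k$ positive semidefinite: establishing total nonnegativity there genuinely requires the Cauchy--Binet expansion combined with total positivity of the Vandermonde matrix, rather than a one-line definiteness argument.
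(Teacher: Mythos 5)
Your proposal follows essentially the same route as the paper, which does not prove this lemma in the appendix but disposes of it in the paragraph preceding the statement: via (\ref{eq:psd}) and Proposition~\ref{prop:consecutive}, strict Hankel $k$-positivity is equivalent to $H_g(t,j)\succ 0$ for all $t\geq 1$ and $1\leq j\leq k$, after which the cone and closure properties are inherited from those of positive definite matrices (the paper cites \cite{fallat2017total} for this). Your Schur-product argument for ii.b, the factorization $H_{g_k}(t,N)=V\,\diag(d)\,V^\transp$ with a totally positive Vandermonde factor for iv, the perturbation $G+\varepsilon G_k$ for iii, and the density-plus-closedness handling of the nonstrict cases are exactly the intended instantiations of that remark, and your explicit warning that positive semidefiniteness of the blocks is only necessary for $\mathcal{HP}_k$ is the right bookkeeping. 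Two caveats. First, the opening equivalence ``$G\in\mathcal{SHP}_k$ iff $H_g(t,k)\succ0$ for all $t$'' silently uses Proposition~\ref{prop:consecutive} to pass from positivity of the \emph{consecutive} minors (the leading principal minors of the blocks $H_g(t,k)$) to positivity of \emph{all} $j$-minors of $H_g(1,N)$; without citing it, Sylvester's criterion only gives a necessary condition, and this proposition is what makes the entire definiteness reformulation work. Second, the solidity claim in part i rests on ``$\mathcal{SHP}_k$ is open,'' which does not hold in the natural $\linf$ topology on impulse responses: the set is cut out by infinitely many strict inequalities $\det(H_g(t,j))>0$ whose values decay to zero as $t\to\infty$ for stable systems, so an arbitrarily small perturbation supported at a large time destroys strictness. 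Openness (and hence nonempty interior) only makes sense relative to a finite-dimensional parametrization, where Theorem~\ref{thm:tpk_poles_coeff} controls the tail behaviour of the compound impulse responses; the paper glosses over this as well, but a self-contained proof of part i must fix the ambient space and topology before asserting that the cone is proper.
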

It follows from Lemma~\ref{lem:intercon} that $G(z)$ is Hankel $\tp{k}$ if and only if $G(z)+\varepsilon G_k(z)$ is strictly Hankel $\tp{k}$ for all $\varepsilon >0$. By continuity, it thus suffices to prove our results under strict Hankel $k$-positivity. In particular, since then $\rk(H_g(1,j)_{(1:j,1:k\})}) = k$, $j \geq k$, the equivalence between item two and three in Theorem~\ref{prop:con_minor} follows from Proposition~\ref{prop:consecutive}. The remaining equivalences are shown in the appendix.

\subsection{Realization of compound systems}
Next, we would like to proof Theorem~\ref{thm:decomp_tpk} and Propositions~\ref{prop:toep_inf_pos} and \ref{prop:relax}. We begin by applying Lemma~\ref{lem:compound_mat} to (\ref{eq:Hankel_mat_real}), which yields \begin{equation*}
	g_{[j]}(t) = \det(H_g(t,j)) = \compound{\Obs{j}(A,c)}{j} \compound{A}{j}^{t-1} \compound{\Con{j}(A,b)}{j}, 
	\end{equation*}
i.e, $G_{[j]}(z)$ admits the state-space realization
\begin{equation}
( \compound{A}{j}, \compound{\Con{j}(A,b)}{j}, \compound{\Obs{j}(A,c)}{j}). \label{eq:ss_compound}
\end{equation}
Note that $g_{[j]} \equiv 0$ for $j > n$, since $\rk(H_g(t,j)) \leq n$, meaning that \emph{Hankel $n$-positivity is equivalent to Hankel total positivity}. A diagonal state-space representation leads then to a characterization of the transfer function $G_{[j]}(z)$ directly in terms of the partial fraction expansion of $G(z)$.
\begin{thm}\label{thm:tpk_poles_coeff}
	Let $G(z) = \sum_{i=1}^{n} \frac{r_i}{z-p_i}$. Then, for $2\leq j \leq n$ it holds that 
\begin{equation}
		G_{[j]}(z) =  \sum_{v \in \mathcal{I}_{n,j}} \frac{\prod_{i=1}^j r_{v_i} \prod_{(i,j) \in \mathcal{I}_{n,2}} (p_{v_i} - p_{v_j})^2}{z-\prod_{i=1}^j p_{v_i}}. \label{eq:compound_tf}
	\end{equation}
Consequently, if $G(z)$ is Hankel $\tp{k}$, then $r_1,\dots,r_k > 0$ and $p_1,\dots,p_k \geq 0$.
\end{thm}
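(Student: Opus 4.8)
The plan is to prove the two assertions separately: first the explicit partial-fraction formula \eqref{eq:compound_tf}, and then the sign conclusions. For the formula, since $G(z)$ has distinct poles I would work with the diagonal realization $A=\diag(p_1,\dots,p_n)$ together with vectors $b=(b_i)$, $c=(c_i)$ chosen so that $b_ic_i=r_i$, which realizes $G(z)=c(zI_n-A)^{-1}b$. Substituting this into the compound realization \eqref{eq:ss_compound}, the ``$A$-matrix'' $\compound{A}{j}$ is again diagonal, with diagonal entries $\prod_{i\in v}p_i$ indexed by $v\in\mathcal{I}_{n,j}$ (Lemma~\ref{lem:compound_mat}~ii)). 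The key computation is to evaluate the compounds of the controllability and observability matrices. Writing $\Con{j}(A,b)=\diag(b_1,\dots,b_n)\,V$, where $V=(p_i^{\,l-1})$ is the $n\times j$ Vandermonde matrix on the nodes $p_1,\dots,p_n$, multiplicativity of compounds (Lemma~\ref{lem:compound_mat}~i), i.e.\ Cauchy--Binet \eqref{eq:CB}) shows that the $v$-th entry of the column $\compound{\Con{j}(A,b)}{j}$ equals $\prod_{i\in v}b_i$ times the $j\times j$ Vandermonde determinant on $\{p_i:i\in v\}$, namely $\prod_{i\in v}b_i\prod_{a<b,\,a,b\in v}(p_b-p_a)$. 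The analogous computation for $\compound{\Obs{j}(A,c)}{j}$ gives $\prod_{i\in v}c_i$ times the same Vandermonde determinant. Multiplying the $c$- and $b$-entries at each diagonal mode and using $b_ic_i=r_i$, the residue attached to the pole $\prod_{i\in v}p_i$ becomes $\prod_{i\in v}r_i\cdot\big(\prod(p_b-p_a)\big)^2$, which is exactly \eqref{eq:compound_tf}.

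For the sign statement I would invoke Theorem~\ref{prop:con_minor}: Hankel $\tp{k}$-ness of $G(z)$ is equivalent to external positivity of the compound systems $G_{[j]}(z)$ for $1\le j\le k$. I then apply Proposition~\ref{prop:dominantpole} to each $G_{[j]}(z)$ and argue by induction on $j$. For $j=1$ we have $G_{[1]}=G$, so Proposition~\ref{prop:dominantpole} yields $p_1\ge0$ and $r_1>0$. Assume $p_1,\dots,p_{j-1}\ge0$ and $r_1,\dots,r_{j-1}>0$; since the poles are distinct, at most one can vanish and it must be the one of least magnitude $p_n$, so in fact $p_1,\dots,p_{j-1}>0$ (as $j-1\le k-1\le n-1$). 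The dominant pole of $G_{[j]}(z)$ has magnitude $\prod_{i=1}^j|p_i|$ and is real and nonnegative by Proposition~\ref{prop:dominantpole}; identifying it with $\prod_{i=1}^j p_i=(p_1\cdots p_{j-1})\,p_j$ and dividing by the strictly positive quantity $p_1\cdots p_{j-1}$ forces $p_j\ge0$. Finally, the residue of $G_{[j]}(z)$ at this dominant pole is the $v=(1,\dots,j)$ term of \eqref{eq:compound_tf}, namely $\prod_{i=1}^j r_i\prod_{a<b}(p_a-p_b)^2$; as $p_1,\dots,p_j$ are now real and distinct the square product is strictly positive, so the residue positivity from Proposition~\ref{prop:dominantpole} gives $\prod_{i=1}^j r_i>0$, whence $r_j>0$. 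This closes the induction.

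The hard part is the middle step: rigorously identifying $\prod_{i=1}^j p_i$ as \emph{the} dominant pole of $G_{[j]}(z)$ and showing that $p_j$ is real. The difficulty is that magnitude ties among $p_j,\dots,p_n$---in particular a complex conjugate pair at the $j$-th magnitude level---could a priori place several eigenvalues of $\compound{A}{j}$ at maximal magnitude. Here Proposition~\ref{prop:dominantpole} does the real work: an externally positive system cannot have a complex dominant pole (its impulse response would change sign infinitely often), so a nonreal $p_j$ together with the already real and positive $p_1,\dots,p_{j-1}$ would render \emph{all} maximal-magnitude eigenvalues of $\compound{A}{j}$ a nonreal conjugate pair, contradicting external positivity of $G_{[j]}(z)$; hence $p_j$ must be real, and nonnegativity then follows as above. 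To avoid degeneracies arising from coincidences among the products $\prod_{i\in v}p_i$ (which would merge poles of $G_{[j]}(z)$ and complicate the residue bookkeeping), I would first restrict to strictly Hankel $\tp{k}$ systems and perturb the poles within this open set so that the pole products are distinct, using the density in Lemma~\ref{lem:intercon}~iii), and then recover the general statement by continuity.
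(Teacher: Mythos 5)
Your proof takes essentially the same route as the paper's: the formula is obtained from the diagonal realization and the compound/Vandermonde computation (the paper's Lemma~\ref{lem:diagonal} applied to the realization \eqref{eq:ss_compound}), and the sign claim by applying Proposition~\ref{prop:dominantpole} to the externally positive compounds $G_{[j]}(z)$ furnished by Theorem~\ref{prop:con_minor}. You are in fact more careful than the paper about the dominant-pole tie-breaking; the only inaccuracy is that when $|p_{j-1}|=|p_j|$ with $p_j$ nonreal, the maximal-magnitude spectrum of $\compound{A}{j}$ \emph{does} contain the real positive product $p_j\bar{p}_j\prod_{i=1}^{j-2}p_i$, so the contradiction in that sub-case must come from its negative residue (the factor $(p_j-\bar{p}_j)^2<0$ in \eqref{eq:compound_tf}) rather than from all maximal-magnitude poles being nonreal.
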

The last claim is a consequence of external positivity of $G_{[j]}$, $1\leq j \leq k$ from Theorem~\ref{prop:con_minor} and Proposition~\ref{prop:dominantpole}. In the appendix, we show how Theorem~\ref{thm:decomp_tpk} follows from Theorem~\ref{thm:tpk_poles_coeff} as well as the following generalization of the third item in Proposition~\ref{prop:relax}.
 \begin{lem}
	\label{lem:diff_Hank}
	Let $G(z)$ be externally positive and $g_d := -\Delta^{(1)} g$. If $G_d(z)$ is Hankel $\tp{k}$, then $G(z)$ is Hankel $\tp{k}$. Vice-versa, if $G(z) = \sum_{i = 1}^n \frac{r_i}{z-p_i}$ with $k_i > 0$ and $p_i \geq 0$, then $G_d(z)$ is Hankel totally positive.  
\end{lem}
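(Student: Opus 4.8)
The statement has two parts, and the converse (its second sentence) is the quick one. The plan for it is to realise $G(z)=\sum_{i=1}^n\frac{r_i}{z-p_i}$ by the diagonal triple $(\diag(p_1,\dots,p_n),b,c)$, so that $g(t)=\sum_i r_i p_i^{t-1}$ and hence
\[
g_d(t)=g(t)-g(t+1)=\sum_{i=1}^n r_i(1-p_i)\,p_i^{t-1},
\]
i.e. $G_d(z)=\sum_i\frac{r_i(1-p_i)}{z-p_i}$: differencing keeps the poles and merely scales the $i$-th residue by $(1-p_i)$. Since $g\in\linf$ and $p_i\ge 0$ force $0\le p_i\le 1$, the new residues $r_i(1-p_i)$ are nonnegative while the poles stay nonnegative, so $G_d(z)$ has exactly the form required by Proposition~\ref{prop:relax} (equivalently Proposition~\ref{prop:toep_inf_pos}) and is therefore Hankel totally positive. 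This settles the converse.

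For the forward implication I would work through the compound systems. By Theorem~\ref{prop:con_minor} it suffices to show that each $\compound{G}{j}$, $1\le j\le k$, is externally positive, that is, $\compound{g}{j}(t)=\det(H_g(t,j))\ge 0$ for all $t\ge 1$. The starting point is the summation identity $g(m)=\sum_{\tau\ge 0}g_d(m+\tau)$, which holds once $g(t)\to 0$; by the density and continuity statements of Lemma~\ref{lem:intercon} I may assume $G$ is asymptotically stable (dominant pole strictly inside the unit disk), so that $g_d\in\lint$ and the identity is valid, the boundary case following by continuity.

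The crux is a determinantal expansion. Writing each of the $j$ columns of $H_g(t,j)$ as the convergent series $\sum_{\tau\ge 0}$ of the corresponding column of the shifted Hankel array $\mathcal H_{p,q}:=g_d(t+p+q-2)$ and using multilinearity of the determinant in its columns yields
\[
\det(H_g(t,j))=\sum_{q_1\ge 1,\dots,q_j\ge j}\det(\mathcal H_{\{1,\dots,j\},(q_1,\dots,q_j)}),
\]
where column $b$ ranges over the indices $q_b\ge b$. Grouping the terms by the underlying sorted column set $\{\bar q_1<\dots<\bar q_j\}$ and using that the determinant is alternating in its columns, the accompanying signed count of admissible permutations ($\bar q_{\sigma(b)}\ge b$ for all $b$) is exactly $\det B$, where $B$ is the $0/1$ matrix $B_{b,i}=\mathbb{1}[\bar q_i\ge b]$. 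Subtracting consecutive rows telescopes the difference-rows to $\mathbb{1}[\bar q_i=b]$, which shows $\det B\in\{0,1\}$ and equals $1$ precisely when $\{\bar q_1,\dots,\bar q_{j-1}\}=\{1,\dots,j-1\}$. Hence only the column sets $\{1,\dots,j-1,m\}$, $m\ge j$, survive and
\[
\compound{g}{j}(t)=\det(H_g(t,j))=\sum_{m\ge j}\det(\mathcal H_{\{1,\dots,j\},\{1,\dots,j-1,m\}}).
\]
Each summand is a genuine $j$-minor of the Hankel array of $g_d$, hence nonnegative whenever $G_d$ is Hankel $\tp{j}$; since $j\le k$ and $G_d$ is Hankel $\tp{k}$, every term is $\ge 0$, and so is $\compound{g}{j}(t)$. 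External positivity of $\compound{G}{j}$ for $1\le j\le k$ then gives Hankel $\tp{k}$ of $G$ via Theorem~\ref{prop:con_minor}.

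I expect the main obstacle to be establishing, and correctly bookkeeping the signs in, the combinatorial identity of the previous paragraph: the multilinear expansion produces column multi-indices that are neither ordered nor distinct, and the whole argument hinges on the signed permutation count collapsing to the $0/1$ staircase determinant. A secondary, more technical point is the reduction to the asymptotically stable case, needed to justify the identity $g=\sum_{\tau\ge 0}g_d(\cdot+\tau)$ and the absolute convergence of the column-wise expansion.
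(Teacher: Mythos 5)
Your converse argument is essentially the paper's: both reduce to first-order lags with nonnegative residues and nonnegative poles (you via the explicit formula $g_d(t)=\sum_i r_i(1-p_i)p_i^{t-1}$ together with $p_i\in[0,1]$ from $g\in\linf$; the paper via $H_{-\Delta^{(1)}g_i}(t,j)\succeq 0$ for each lag and the cone property of Lemma~\ref{lem:intercon}). One caution: for the final step cite Lemma~\ref{lem:intercon} (item iv) rather than Proposition~\ref{prop:relax}, since the paper derives Proposition~\ref{prop:relax} \emph{from} the present lemma, so quoting it here is circular.

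The forward direction is where you genuinely depart from the paper, and your combinatorial identity is sound as far as it goes: the signed permutation count does collapse to the staircase determinant $\det B\in\{0,1\}$, only the column sets $\{1,\dots,j-1,m\}$ survive, and the resulting expansion of $\det(H_g(t,j))$ into (non-consecutive) $j$-minors of the Hankel array of $g_d$ checks out. The paper instead works entirely at the level of positive semidefiniteness: $H_{g_d}(l,j)\succeq 0$ for $j\le k$, and $H_g(t,j)=\lim_{s\to\infty}H_g(s,j)+\sum_{l\ge t}H_{g_d}(l,j)$ is a sum of positive semidefinite matrices, the limit term being identified with $\lim_{s}H_{g_1}(s,j)\succeq 0$ via Proposition~\ref{prop:dominantpole}. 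Your route buys a direct statement that every consecutive minor of $H_g$ is a nonnegative combination of minors of $H_{g_d}$, without ever invoking symmetry; but it has a genuine gap exactly where the paper's limit term does its work. When the dominant pole satisfies $p_1=1$, so that $g(t)\to r_1>0$, the identity $g(m)=\sum_{\tau\ge 0}g_d(m+\tau)$ fails by the constant $r_1$: each column of $H_g(t,j)$ acquires an extra summand $r_1\mathbf{1}$, and the cross terms in the multilinear expansion are minors of the bordered array $\begin{pmatrix}\mathbf{1} & \mathcal H_{g_d}\end{pmatrix}$, which are not minors of $H_{g_d}$ and need a separate positivity argument. Your proposed repair by density/continuity does not obviously go through either: the natural perturbation $g\mapsto\rho^t g$ gives $-\Delta^{(1)}(\rho^t g)=\rho^t g_d+(1-\rho)\rho^t g(\cdot+1)$, whose second term is not known to be Hankel $\tp{k}$ (that is precisely what is being proved), while perturbing $g_d$ inside $\mathcal{HP}_k$ does not recover $g$ as a tail sum. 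This is not a corner case: the paper's proof of Theorem~\ref{thm:decomp_tpk} invokes the lemma after normalizing $p_1=1$. You would need either to handle the bordered minors explicitly or to adopt the paper's accounting of the limit term.
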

Indeed, the equivalence between the first and third item in Proposition~\ref{prop:relax} is a recursive application of Lemma~\ref{lem:diff_Hank} to $g_d$. Further, since $G_{[n]}(z)$ is a first-order system, checking its external positivity is equivalent to $H_g(1,n) = \compound{g}{n}(1) > 0$ and $H_g(2,n) = g_{[n]}(2) \geq 0$. Thus, by Proposition~\ref{prop:toep_inf_pos}, Theorem~\ref{prop:con_minor} and (\ref{eq:psd}), we recover the equivalence between the first and second item in Proposition~\ref{prop:relax}. 

Hence, once we have verified that we can recover Proposition~\ref{prop:toep_inf_pos}, our analysis covers Proposition~\ref{prop:relax}. For systems with simple poles, this is immediate from Theorem~\ref{prop:con_minor} together with a recursive application of Theorem~\ref{thm:decomp_tpk}. To see that Theorem~\ref{thm:decomp_tpk} remains true in case of repeated poles, one should notice that if $$G(z) = \sum_{a=1}^{l} \sum_{b=1}^{m_a} \frac{r_{ba}}{(z-p_a)^b}$$ with $m_1 > 0$, then through our compound system realization, it is easy to show that $G_{[1]}(z)$ has a repeated dominant real pole. However, this contradicts the log-convexity of $g$ on $\mathds{Z}_{>0}$ in Theorem~\ref{prop:con_minor}. Inductively, we can conclude the same for $G_{[j]}$ and $m_j$ with $1\leq j \leq k-1$. In particular, if $p_{k-1} = 0$, then $G_{[j]}$ is of order $k-1$. We summarize this as follows.
\begin{prop}
	Let $G(z) = \sum_{a=1}^{l} \sum_{b=1}^{m_a} \frac{r_{ba}}{(z-p_a)^b}$ be Hankel $\tp{k}$. Then, $m_1 = \dots = m_{k-1} = 1$ and $p_{k-1} > 0$ for $k \leq \sum_{a=1}^l {m_a}$. In particular, if $G(z)$ is Hankel totally positive, then all poles are simple.   
\end{prop}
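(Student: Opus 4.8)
The plan is to run an induction over the compound systems, turning the multiplicity statement into a statement about repeated \emph{dominant} poles that is then ruled out by log-convexity. The claim is vacuous for $k=1$, so assume $k\geq 2$ and set $n=\sum_{a=1}^{l}m_a$. By the last item of Theorem~\ref{prop:con_minor}, $G_{[j]}(z)$ is Hankel $\tp{k-j+1}$ for $1\leq j\leq k$; in particular, for every $1\leq j\leq k-1$ we have $k-j+1\geq 2$, so $G_{[j]}(z)$ is Hankel $\tp{2}$. Applying Theorem~\ref{prop:con_minor} to $G_{[j]}(z)$ with $k=2$, this is equivalent to $\compound{g}{j}\geq 0$ together with log-convexity of $\compound{g}{j}$ on $\mathds{Z}_{>0}$, since $g_{[j]}(t)g_{[j]}(t+2)-g_{[j]}(t+1)^2\geq 0$ is precisely the log-convexity condition. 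Moreover $G_{[j]}(z)$ is externally positive, so by Proposition~\ref{prop:dominantpole} its dominant pole is real and nonnegative.

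The key step is the following dichotomy for each such $\compound{g}{j}$: an externally positive, log-convex impulse response has a \emph{simple, strictly positive} dominant pole. First, the dominant pole cannot be a repeated real pole $P>0$: a pole of order $d+1\geq 2$ forces $g_{[j]}(t)\sim c\,t^{d}P^{t}$ with $c>0$ (external positivity), whence $\Delta^{(2)}\log g_{[j]}(t)=d\log\frac{t(t+2)}{(t+1)^2}+O(t^{-3})\sim -d/(t+1)^2<0$ for large $t$, contradicting log-convexity. Second, the dominant pole cannot be $0$: otherwise $\compound{g}{j}$ would be finitely supported, but $g_{[j]}(1)=\det H_g(1,j)>0$ by minimality, and log-convexity propagates a vanishing value backward (if $g_{[j]}(t_0)=0$ then $g_{[j]}(t_0-2)\,g_{[j]}(t_0)\geq g_{[j]}(t_0-1)^2$ forces $g_{[j]}(t_0-1)=0$, hence $\compound{g}{j}\equiv 0$ up to $t_0$), incompatible with $g_{[j]}(1)>0$. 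Combined with Proposition~\ref{prop:dominantpole}, the dominant pole of $G_{[j]}(z)$ is therefore real, strictly positive, and simple.

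It remains to translate this into the multiplicities $m_a$. Applying Lemma~\ref{lem:compound_mat}(ii) to the compound realization~\eqref{eq:ss_compound}, the spectrum of $\compound{A}{j}$ consists of the products of $j$-element subsets of the poles of $G(z)$, so its magnitude-dominant eigenvalue is $P_j:=p_1p_2\cdots p_j$, the product of the $j$ largest poles. I induct on $j$, showing $m_j=1$ for $1\leq j\leq k-1$; assume $m_1=\dots=m_{j-1}=1$ (the case $j=1$ is empty). If $m_j\geq 2$, then $p_j$ occurs at least twice among the poles, and wedging one eigenvector from each of the simple blocks $p_1,\dots,p_{j-1}$ with the $m_j$-dimensional generalized eigenspace of $p_j$ exhibits a size-$m_j$ Jordan block of $\compound{A}{j}$ at $P_j$; since $(A,b,c)$ is minimal this dominant mode is controllable and observable in~\eqref{eq:ss_compound}, so $P_j$ is a pole of $G_{[j]}(z)$ of order $m_j\geq 2$, contradicting the previous paragraph. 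Hence $m_j=1$, and the induction runs through $j=k-1$. For positivity, the previous paragraph gives $P_j=p_1\cdots p_j>0$ for each $j\leq k-1$, so each $p_i=P_i/P_{i-1}>0$; in particular $p_{k-1}>0$. The total-positivity statement follows by taking $k=n$ (Hankel $n$-positivity equals Hankel total positivity): then $m_1=\dots=m_{n-1}=1$, and since $\sum_a m_a=n$ this forces all multiplicities to equal $1$.

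The main obstacle is the persistence claim in the third paragraph, namely that the repeated dominant eigenvalue of $\compound{A}{j}$ genuinely appears as an order-$\geq 2$ pole of $G_{[j]}(z)$. This needs (i) that the size-$m_j$ Jordan block survives the compounding, (ii) that it is neither uncontrollable nor unobservable in~\eqref{eq:ss_compound} and hence not cancelled in the transfer function, and (iii) that any complex poles of $G(z)$ of the same modulus as $p_j$ do not produce a competing equal-magnitude eigenvalue of $\compound{A}{j}$ that spoils the real asymptotic $g_{[j]}(t)\sim c\,t^{m_j-1}P_j^{t}$. Point (iii) is resolved by the positivity conclusion itself, since it shows the top $j$ poles are real positive and thus strictly ordered by magnitude; points (i)--(ii) are handled either by the explicit Jordan analysis of $\compound{A}{j}$ together with minimality (equivalently $\det H_g(1,j)\neq 0$), or by evaluating the determinant $g_{[j]}(t)=\det H_g(t,j)$ asymptotically through~\eqref{eq:Hankel_mat_real}.
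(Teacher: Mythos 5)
Your proof is correct and follows essentially the same route as the paper: the paper's own argument (given only as a sketch in the main text) likewise uses the compound realization to argue that $m_j>1$ would force $G_{[j]}(z)$ to have a repeated dominant real pole, contradicting the log-convexity of $g_{[j]}$ on $\mathds{Z}_{>0}$ guaranteed by Theorem~\ref{prop:con_minor}, and rules out $p_{k-1}=0$ via the resulting degeneracy of $G_{[k-1]}(z)$. Your write-up simply supplies the asymptotic estimate and the Jordan-structure/minimality details that the paper dismisses as ``easy to show.''
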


\section{Toeplitz $k$-positivity}
\label{sec:toep}
The analysis of  Toeplitz $\tp{k}$ systems essentially follows the same steps as in the previous section,  by replacing the Hankel matrix $H_g(t,j)$ with the Toeplitz matrix $T_g(t,j)$. The key observation is to note that if $t-j \geq 0$, then $\det(T_g(t,j)) = \xi(j) \det(H_g(t-j+1,j))$ with 
\begin{equation}
	\xi(j) := \begin{cases}
	1 & j \ \mathrm{mod} \ 4 \leq 1\\ 
	-1 & \text{else}
	\end{cases}.
\end{equation}
This yields the following analogue to Theorem~\ref{prop:con_minor}.
\begin{thm} \label{prop:con_minor_Toep}
Let $G(z)$ and $k \in \mathbb{Z}_{\geq 1}$ be such that the $k-1$-th largest pole is non-zero. Then, we have the equivalences: 
\begin{enumerate}[I.]
	\item $G(z)$ is Toeplitz \ovd{k-1}.
	\item $G(z)$ is Toeplitz $\tp{k}$.
	\item $\xi(j) G_{[j]}$ is externally positive for $1 \leq j \leq k$ and $\det(T_g(t,j)) > 0$ for $t_0\leq t \leq j-1$ and $1 \leq j \leq k-1$, where $t_0 := \min \{t: g(t) \neq 0\}$.
\end{enumerate}  
\end{thm}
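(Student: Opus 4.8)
The plan is to transcribe the Hankel development of Section~\ref{sec:hankel} almost verbatim, using the identity $\det(T_g(t,j)) = \xi(j)\det(H_g(t-j+1,j))$ (valid for $t \ge j$) to carry the consecutive-minor analysis over from Hankel to Toeplitz matrices, and then to handle by hand the finitely many ``boundary'' minors to which this identity does not apply. The sign $\xi(j)$ is precisely the sign $(-1)^{j(j-1)/2}$ of the column-reversal permutation that turns the Toeplitz block $T_g(t,j)$ into the Hankel block $H_g(t-j+1,j)$, which also explains its period-four pattern.

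For I $\Leftrightarrow$ II, I would first prove the Toeplitz analogue of Lemma~\ref{prop:op_seq}: $\Toep{g}$ is \ovd{k-1} if and only if every finite truncation $\Toep{g}^j$, represented by the matrix $T_g(0,j)$, is \ovd{k-1}. Proposition~\ref{prop:SR} then identifies this input--output property with $\tp{k}$ of all these truncations, that is, with the matrix property II. Exactly as in the Hankel case, I would next invoke the Toeplitz counterpart of Lemma~\ref{lem:intercon} --- series interconnection preserves Toeplitz $k$-positivity by Cauchy--Binet (\ref{eq:CB}), and the strictly $\tp{k}$ systems are dense --- to reduce the remaining equivalence to the strictly positive case, so that Proposition~\ref{prop:consecutive} applies with strict inequalities and the general statement is recovered by continuity.

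The core is II $\Leftrightarrow$ III, obtained by splitting the consecutive minors $\det(T_g(t,j))$ into three regimes. For $t < t_0$ the top row of $T_g(t,j)$ consists of samples of $g$ with argument below $t_0$ and is therefore zero, so $\det(T_g(t,j))=0$ and no condition arises. For $t \ge j$ the identity gives $\det(T_g(t,j)) = \xi(j)g_{[j]}(t-j+1)$; as $t-j+1$ runs over $\mathds{Z}_{\ge 1}$, nonnegativity of all these minors is exactly external positivity of $\xi(j)G_{[j]}$, the first family of conditions in III. The genuinely new minors are those with $t_0 \le t \le j-1$, which constitute the explicit determinant conditions of III; the hypothesis that the $(k-1)$-th largest pole is nonzero is used here to guarantee that $H_g(1,k-1)$ has full rank, so that each $G_{[j]}$ with $j\le k-1$ is nondegenerate and the strict positivity demanded of the lower-order minors is attainable.

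The main obstacle I anticipate is that Proposition~\ref{prop:consecutive} still requires \emph{every} consecutive $k$-minor to be nonnegative, including the top-order boundary minors $\det(T_g(t,k))$ with $t_0 \le t \le k-1$, which are covered neither by the external-positivity family (it only reaches $t\ge k$) nor by the explicit list of III (it stops at $j=k-1$). These must be deduced from the lower-order conditions. For the extreme minor $t=t_0$ the causal zero-structure collapses the top row to $(g(t_0),0,\dots,0)$ and yields the factorization $\det(T_g(t_0,k)) = g(t_0)\det(T_g(t_0,k-1))$, a product of quantities already known to be positive; the remaining boundary $k$-minors are then propagated by the Desnanot--Jacobi identity~(\ref{eq:Sylvester_id}), whose central $(k-2)$-minor is nonzero precisely by the pole hypothesis. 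Carrying out this sign bookkeeping carefully --- and checking that the boundary range is exactly $t_0 \le t \le j-1$ --- is the only delicate step; everything else is a direct translation of the Hankel arguments.
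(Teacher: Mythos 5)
Your proposal follows the same route the paper takes: there is no separate appendix proof of Theorem~\ref{prop:con_minor_Toep}, and Section~\ref{sec:toep} argues exactly as you do --- transfer the Hankel analysis through the column-reversal identity $\det(T_g(t,j)) = \xi(j)\det(H_g(t-j+1,j))$ for $t\ge j$, obtain I$\,\Leftrightarrow\,$II from the truncation lemma and Proposition~\ref{prop:SR}, and obtain II$\,\Leftrightarrow\,$III from Proposition~\ref{prop:consecutive}. Your identification of $\xi(j)=(-1)^{j(j-1)/2}$ as the reversal sign and your three-regime split of the consecutive minors are correct, and you rightly single out the boundary minors as the place where the Hankel argument does not transcribe.

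Two points, however. First, your reading of the pole hypothesis is not the paper's: it is not there to make $H_g(1,k-1)$ full rank (minimality already gives that), but --- as the remark after the theorem states --- to ensure that none of $G_{[1]},\dots,G_{[k-1]}$ is a non-zero FIR system. This matters because Proposition~\ref{prop:consecutive} demands \emph{strict} positivity of the lower-order consecutive minors; by the Desnanot--Jacobi identity each $g_{[j]}$ is log-concave, so its support is an interval, and the non-FIR condition (plus a time shift) is what upgrades $\det(T_g(t,j))\ge 0$ to $>0$ on the relevant range --- this is precisely the mechanism used in the paper's proof of Theorem~\ref{thm:decomp_toep}, and it replaces the density argument of Lemma~\ref{lem:intercon} that you invoke without a stated Toeplitz analogue. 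Second, your treatment of the boundary $k$-minors is the one genuinely incomplete step: the identity \textup{(\ref{eq:Sylvester_id})} gives $\det(T_g(t,k))\det(T_g(t,k-2)) = \det(T_g(t,k-1))^2-\det(T_g(t-1,k-1))\det(T_g(t+1,k-1))$, so ``propagating'' nonnegativity of these minors is equivalent to proving log-concavity of the boundary $(k-1)$-minor sequence, which is not among the hypotheses of III and which you do not derive. Relatedly, the consecutive $j$-minors of $T_g(0,N)$ with $t<t_0$ vanish identically, so Proposition~\ref{prop:consecutive} as stated (requiring \emph{all} lower-order consecutive minors to be positive) cannot be invoked verbatim on the full Toeplitz matrix; one needs the refinement of the consecutive-minor criterion for triangularly supported kernels. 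The paper glosses over both issues, so you are in good company, but as written your proposal does not close them either.
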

\begin{rem}
The  equivalence between the first two items in Theorem~\ref{prop:con_minor_Toep} remains true if the $k-1$-th largest pole is zero. However, in order to be able to apply Proposition~\ref{prop:consecutive}, we implicitly added the assumption that none of the first $k-1$ compound systems is a non-zero FIR system. This is not necessary in the Hankel case, since it is implied by log-convexity of $G_{[j]}$, $1\leq j \leq k-1$ (see~Theorem~\ref{prop:con_minor}).
\end{rem}
Then, Theorem~\ref{thm:tpk_poles_coeff} provides a characterization of the poles and coefficients. 
\begin{cor}\label{cor:tpk_coeff}
	Let $G(z) = \sum_{i=1}^{n} \frac{r_i}{z-p_i}$, $k \in \mathbb{Z}_{>0}$ and $m := \min\{k,n\}$. If $G(z)$ is Toeplitz $\tp{k}$, it holds that $(-1)^{i+1} r_i >0$ for $1\leq i\leq m$ and $p_1,\dots,p_m \geq 0$.
\end{cor}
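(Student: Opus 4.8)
The plan is to treat this as the Toeplitz counterpart of the final assertion of Theorem~\ref{thm:tpk_poles_coeff}: where the Hankel case reads the signs of the residues and poles off the external positivity of the compound systems $G_{[j]}(z)$, the Toeplitz case does the same for the sign-corrected systems $\xi(j)G_{[j]}(z)$, and the alternating factor $\xi$ is exactly what produces the alternating residue signs $(-1)^{i+1}r_i>0$. I would also use the distinct-pole partial fraction form of $G_{[j]}(z)$ supplied by Theorem~\ref{thm:tpk_poles_coeff} together with Proposition~\ref{prop:dominantpole}.

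First I would establish that $\xi(j)G_{[j]}(z)$ is externally positive for $1\le j\le m$. Since $G(z)$ is Toeplitz $\tp{k}$, every $j$-minor of the lower-triangular convolution matrix is nonnegative for $j\le k$; in particular the consecutive minors satisfy $\det(T_g(t,j))\ge 0$ for all $t\ge 0$. Invoking the identity $\det(T_g(t,j))=\xi(j)\det(H_g(t-j+1,j))=\xi(j)g_{[j]}(t-j+1)$ (valid for $t\ge j$, so the argument runs over all $s\ge 1$) yields $\xi(j)g_{[j]}(s)\ge 0$ for every $s\ge 1$, i.e. the impulse response of $\xi(j)G_{[j]}(z)$ is nonnegative. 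Since $g_{[j]}\equiv 0$ for $j>n$, this is only informative for $j\le m=\min\{k,n\}$, which is why the range of the claim is $m$. This is precisely item III of Theorem~\ref{prop:con_minor_Toep}, extracted here without its nonvanishing hypothesis.

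Next I would locate the dominant pole and leading residue of $G_{[j]}(z)$. By Lemma~\ref{lem:compound_mat}(ii) its poles are the products $\prod_{i\in I}p_i$ over $I\in\mathcal{I}_{n,j}$; ordering by magnitude, the pole of largest modulus is $\prod_{i=1}^{j}p_i$, and Theorem~\ref{thm:tpk_poles_coeff} gives its residue as $\prod_{i=1}^{j}r_i\prod_{1\le a<b\le j}(p_a-p_b)^2$. Applying Proposition~\ref{prop:dominantpole} to the externally positive $\xi(j)G_{[j]}(z)$ shows that this dominant pole is real and nonnegative, so $\prod_{i=1}^{j}p_i\ge 0$, while its leading residue is strictly positive, so $\xi(j)\prod_{i=1}^{j}r_i\prod_{a<b}(p_a-p_b)^2>0$. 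As the poles of $G(z)$ are distinct the squared differences are positive and cancel, leaving $\xi(j)\prod_{i=1}^{j}r_i>0$ for $1\le j\le m$. Then I would chase signs: for the poles, $p_1\ge 0$ from $j=1$, and inductively, if $\prod_{i<j}p_i>0$ then $p_j=\big(\prod_{i\le j}p_i\big)/\big(\prod_{i<j}p_i\big)\ge 0$, whereas if $\prod_{i<j}p_i=0$ some earlier pole vanishes and, by the descending-magnitude ordering, so do all later ones, giving $p_j=0$; either way $p_1,\dots,p_m\ge 0$. For the residues, the inequalities $\xi(j)\prod_{i=1}^{j}r_i>0$ force each $r_j\ne 0$ and $\sign(r_j)=\xi(j)\xi(j-1)$ for $j\ge 2$ (with $\sign(r_1)=\xi(1)=+$); the period-four pattern of $\xi$ gives $\xi(j)\xi(j-1)=(-1)^{j+1}$, hence $(-1)^{i+1}r_i>0$ for $1\le i\le m$.

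The main obstacle I anticipate is the clean identification of $\prod_{i=1}^{j}p_i$ as the residue-bearing dominant pole of $G_{[j]}(z)$ when several poles of $G(z)$ share the same modulus, since then the product of the $j$ largest poles need not be the unique maximizer and Proposition~\ref{prop:dominantpole} must be applied with care; the tie-breaking by real part together with the realness of the dominant pole of an externally positive system is what resolves this. The remaining work, namely the induction on products of poles and the period-four sign bookkeeping for $\xi$, is routine once external positivity of $\xi(j)G_{[j]}(z)$ is in hand, and mirrors the Hankel argument behind Theorem~\ref{thm:tpk_poles_coeff} verbatim apart from the alternating factor.
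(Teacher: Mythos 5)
Your proposal is correct and follows essentially the same route the paper intends: the paper presents this corollary as an immediate consequence of the external positivity of $\xi(j)G_{[j]}(z)$ (via the identity $\det(T_g(t,j))=\xi(j)\det(H_g(t-j+1,j))$), the partial-fraction formula of Theorem~\ref{thm:tpk_poles_coeff}, and Proposition~\ref{prop:dominantpole}, which is exactly your chain of reasoning. Your explicit sign bookkeeping $\xi(j)\xi(j-1)=(-1)^{j+1}$ and the induction on $\prod_{i\le j}p_i$, as well as the remark that external positivity of $\xi(j)G_{[j]}$ should be read off the minors directly rather than through the nonvanishing hypothesis of Theorem~\ref{prop:con_minor_Toep}, merely make explicit what the paper leaves implicit.
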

For $k=2$, this means that while the sum of two first order lags is Hankel totally positive, its difference is Toeplitz totally positive (provided external positivity). Further, we derive the following analogue to Theorem~\ref{thm:decomp_tpk}.
\begin{thm}\label{thm:decomp_toep}
	Let $G(z) =  \sum_{i=1}^{n} \frac{r_i}{z-p_i}$ have distinct poles and $2\leq k \leq n$. If $G(z)$ is Toeplitz $\tp{k}$, then $G(z) = \frac{z}{z-p_1}G_r(z)$, where $G_r(z)$ is Toeplitz $\tp{k-1}$ and all real zeros are smaller than $p_{\min\{k,n\}}$. 
\end{thm}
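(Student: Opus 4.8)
The plan is to mirror the proof of the Hankel analogue Theorem~\ref{thm:decomp_tpk}, replacing its parallel decomposition by the serial one dictated by Proposition~\ref{prop:toep_inf_pos}(i); throughout I set $m:=\min\{k,n\}=k$ since $k\le n$. First I would record the available sign information: because $G(z)$ is Toeplitz $\tp{k}$, Corollary~\ref{cor:tpk_coeff} gives $(-1)^{i+1}r_i>0$ and $p_i\ge 0$ for $1\le i\le k$, so that $p_1>p_2>\dots>p_k\ge 0$ and in particular $r_1>0$, $p_1>0$. This justifies peeling off the dominant lag $\tfrac{z}{z-p_1}$.

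Next I would identify $G_r(z):=\tfrac{z-p_1}{z}G(z)$ concretely. The factor $z-p_1$ cancels the pole $p_1$, so $G_r(z)=N(z)/\big(z\prod_{i=2}^n(z-p_i)\big)$, where $N$ is the numerator of $G$; hence $G_r$ is strictly proper, has poles $\{0\}\cup\{p_2,\dots,p_n\}$, and --- crucially for the zero claim --- exactly the same zeros as $G$. In the time domain this reads $g_r(t)=g(t)-p_1g(t-1)$, i.e. $\Toep{g}=\Toep{h}\Toep{g_r}$ with $h(t)=p_1^t$ the impulse response of $\tfrac{z}{z-p_1}$. A partial-fraction computation gives the residues $\tilde r_i=r_i(p_i-p_1)/p_i$ for the nonzero poles $p_i$ ($i\ge 2$), and the signs recorded above yield $(-1)^i\tilde r_i>0$; this is exactly the necessary coefficient condition that Corollary~\ref{cor:tpk_coeff} would demand of $G_r$, and serves as a consistency check.

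The core is to prove that $G_r$ is Toeplitz $\tp{k-1}$. By Theorem~\ref{prop:con_minor_Toep} this splits into (i) external positivity of $\xi(j)G_{r,[j]}$ for $1\le j\le k-1$, and (ii) positivity of the low, not-fully-overlapping consecutive minors $\det(T_{g_r}(t,j))$. Part (ii) I would handle by expanding these minors through the unipotent relation $g_r(t)=g(t)-p_1g(t-1)$ with $p_1\ge 0$, reducing them to the positive consecutive Toeplitz minors of $G$ furnished by Theorem~\ref{prop:con_minor_Toep}. Part (i) is the main obstacle. The serial identity gives, via Cauchy--Binet (Lemma~\ref{lem:compound_mat}), $(\Toep{g})_{[j]}=(\Toep{h})_{[j]}(\Toep{g_r})_{[j]}$, and since $\tfrac{z}{z-p_1}$, with $h(t)=p_1^t\ge 0$, has a totally positive Toeplitz operator (Proposition~\ref{prop:toep_inf_pos}(i)), the factor $(\Toep{h})_{[j]}$ is nonnegative; but this runs in the wrong direction, as nonnegativity of the product does not force nonnegativity of $(\Toep{g_r})_{[j]}$. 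I would therefore certify (i) as in the appendix argument for Theorem~\ref{thm:decomp_tpk}: use Theorem~\ref{thm:tpk_poles_coeff} to write $G_{r,[j]}$ in explicit pole--residue form and read off external positivity of $\xi(j)G_{r,[j]}$ from the sign pattern above --- the serial counterpart of the difference-operator Lemma~\ref{lem:diff_Hank}. Producing this external-positivity certificate is where essentially all the difficulty lies, since external positivity is strictly stronger than nonnegativity of the dominant residue alone.

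Finally, the zero bound. Since each peeled factor leaves the zeros unchanged, applying the factorization recursively (by induction on $k$) yields $G(z)=\prod_{i=1}^{k-1}\tfrac{z}{z-p_i}\,G_{\mathrm{rest}}(z)$ with $G_{\mathrm{rest}}$ externally positive (Toeplitz $\tp{1}$), having poles $\{0,\dots,0,p_k,\dots,p_n\}$, dominant pole $p_k$, and the same zeros as $G$ (hence as $G_r$, since poles and zeros are disjoint). Proposition~\ref{prop:dominantpole} then bounds every real zero of $G_{\mathrm{rest}}$, equivalently of $G_r$, strictly below $p_k=p_{\min\{k,n\}}$, which completes the argument.
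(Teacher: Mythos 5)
Your setup is sound --- the sign bookkeeping via Corollary~\ref{cor:tpk_coeff}, the identification $g_r(t)=g(t)-p_1g(t-1)$, the residue computation for $G_r$, and the correct observation that the Cauchy--Binet factorization $(\Toep{g})_{[j]}=(\Toep{h})_{[j]}(\Toep{g_r})_{[j]}$ runs in the wrong direction. But the proof has a genuine gap exactly where you flag it: you propose to ``read off external positivity of $\xi(j)G_{r,[j]}$ from the sign pattern'' of the residues, and then concede that external positivity is strictly stronger than having the right dominant residue. That concession is fatal as written --- the residue signs only control the asymptotic sign of $g_{r,[j]}(t)$, not its sign for all $t$, so the central step of the theorem is left unproven. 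Moreover, this is not in fact how the appendix proof of the Hankel analogue Theorem~\ref{thm:decomp_tpk} works, so the claim to be mirroring it does not close the gap.

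The missing mechanism is the Desnanot--Jacobi identity \cref{eq:Sylvester_id}. The paper normalizes $p_1=1$, performs row subtractions to replace $\det(T_g(t,j))$ by the determinant of a matrix $\tilde T(t,j)$ whose rows below the first contain $g_r$, and then applies \cref{eq:Sylvester_id} to $\tilde T(t,j)$ to obtain the three-term inequality
\begin{equation*}
\det(T_g(t,j-1))\,\det(T_{g_r}(t,j-1)) \;\geq\; \det(T_g(t-1,j-1))\,\det(T_{g_r}(t+1,j-1)).
\end{equation*}
Since the factors $\det(T_g(\cdot,j-1))$ are positive (by Toeplitz $k$-positivity of $G$, after discarding an initial interval where minors may vanish), this inequality forces $\det(T_{g_r}(\cdot,j-1))$ to switch sign at most once, from positive to negative. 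Only at that point do the residue signs enter: if the minors were eventually nonpositive, the dominant dynamics of $-{G_r}_{[j-1]}(z)$ would be externally positive, and Proposition~\ref{prop:dominantpole} together with Theorem~\ref{thm:tpk_poles_coeff} and Corollary~\ref{cor:tpk_coeff} would force $(-1)^{j}\bar r_{j-1}\le 0$, contradicting the sign $(-1)^{j}\bar r_{j-1}>0$ you yourself computed. So the asymptotic information you gathered is the right ingredient, but it only finishes the argument after the ``at most one sign change'' step converts it into positivity for all $t$; without that step your proof does not go through. (Your treatment of the zero bound via recursive peeling and Proposition~\ref{prop:dominantpole} is fine.)
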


\section{Examples}
\label{sec:app}
In this section, we will discuss several examples of \ovd{k} systems, which we use to illustrate our theory as well as to point out future directions and relationships to recent related work. 

\subsection{Interconnections of first-order lags}
Consider the two systems
\begin{equation*}
G_a(z) = \frac{r_1}{z-p_1} \; \text{ and } \; G_b(z)= \frac{r_2}{z-p_2} - \frac{r_3}{z-p_3}
\end{equation*}
where it is assumed that $r_1 >0$, $r_2 \geq r_3 > 0$ and $p_2 > p_3 > 0$. The impulse responses
\begin{equation*}
    g_a(t) = r_1p_1^{t-1} \; \text{ and }\; g_b(t) = (r_2p_2^{t-1} - r_3p_3^{t-1}),\; t\geq 1
\end{equation*}
are nonnegative and thus both systems are Toeplitz and Hankel \ovd{0}. In particular, since
\begin{align*}
     (\Hank{g_a}u)(t) = r_1 p_1^t \sum_{\tau=\infty}^{-1} p_1^{-\tau-1} u(\tau) =:  r_1 p_1^t x(0)
\end{align*}
with $x(0)$ a constant, $\vari{\Hank{g_a}u} = 0$ and $\Hank{g}u \geq 0$ for all $u \geq 0$. Thus, proving that 
$G_a(z)$ is Hankel totally \ovd{}. Similarly,
\begin{align*}
  (\Toep{g_a}u)(t) = r_1 p_1^t \sum_{\tau = 0}^{t-1} p_1^{-\tau-1} u(\tau) =: r_1p_1^t x(t)
\end{align*}
and since the damped summation $x$ is only able to follow the sign changes of $u$, it follows that $G_a(z)$ is also Toeplitz totally \ovd{}. The compound systems introduced in Theorems~\ref{prop:con_minor} and \ref{prop:con_minor_Toep} simplify this analysis: it suffices to see that ${G_a}{[j]}(z) = 0$, $j \geq 2$.

Next note that $-{G_b}_{[2]} = \frac{r_2 r_3 (p_2 - p_3)^2}{z - p_2 p_3}$ is externally positive (see~Theorem~\ref{thm:tpk_poles_coeff}), which by Theorems~\ref{prop:con_minor} and \ref{prop:con_minor_Toep} implies that $G_b(z)$ is Toeplitz, but not Hankel, totally \ovd{}. This shows that the difference of Hankel totally positive systems does not remain totally positive (cf.~Lemma~\ref{lem:intercon}). 

Finally, let us consider $G(z) = G_a(z) + G_b(z)$ with $p_1 > p_2$. As the sum of externally positive systems, $G(z)$ is Hankel and Toeplitz \ovd{0}. However, by Corollary~\ref{cor:tpk_coeff} and Proposition~\ref{prop:toep_inf_pos}, $G(z)$ can neither be Toeplitz \ovd{1} nor Hankel totally \ovd{}. To determine when $G(z)$ is Hankel \ovd{1}, we require by Theorem~\ref{prop:con_minor} that 
\small{\begin{equation*}
G_{[2]}(z) = \frac{r_1r_2 (p_1-p_2)^2}{z-p_1p_2}- \frac{r_1r_3 (p_1-p_3)^2}{z-p_1p_3} -  \frac{r_2 r_3 (p_2-p_3)^2}{z-p_2p_3}
\end{equation*}} \normalsize
is externally positive. This is the case if and only if $$r_1 r_2 (p_1-p_2)^2 \geq r_1r_3 (p_1-p_3)^2 + r_2 r_3 (p_2-p_3)^2.$$ 
An illustration of these properties for the Hankel and Toeplitz operators of $G(z)$ are given in Figure~\ref{fig:ex1}.

\begin{figure}
    \centering
    \begin{tikzpicture}
    	\begin{groupplot}[group style={group name=my plots, group size=1 by 3,vertical sep = 3 cm}]
			\nextgroupplot[height=3cm,
				width=15cm,
			axis y line = left,
			axis x line = center,
			ylabel={$(\Hank{g}u)(t)$}, 
			xlabel={$t$}, 
			xlabel style={right},
			xticklabel style={above,yshift=0.5 ex},
			xmax = 7.9,
			]			
			\addplot+[ycomb,black,thick]
				table[x index=0,y index=1] 
				{output_hankel_dim.txt};
				
			\nextgroupplot[height=3cm,
			width=15cm,
			axis y line = left,
			axis x line = center,
			ylabel={$(\Toep{g}u)(t)$}, 
			xlabel={$t$}, 
			xlabel style={right},
			xticklabel style={above,yshift=1.7ex},
			xmax = 7.9,
			ymax = .9,
			]			
			\addplot+[ycomb,black,thick]
				table[x index=0,y index=1] 
				{output_toep.txt};
				
			\nextgroupplot[height=3cm,
			width=15cm,
			axis y line = left,
			axis x line = center,
			ylabel={$(\Hank{g}u)(t)$}, 
			xlabel={$t$}, 
			xlabel style={right},
			xticklabel style={above,yshift=0.5ex},
	        xmax = 7.9,
			]			
			\addplot+[ycomb,black,thick]
				table[x index=0,y index=1] 
				{output_hankel.txt};	
		\end{groupplot}
			\node[text width=0.97 \textwidth ,align=center,anchor=north] at ([yshift=-3mm,xshift = -1 mm]my plots c1r1.south) {\subcaption{\setlength{\tabcolsep}{1.5 pt}Output of the Hankel operator, where $u$ is zero except for $u(-1:-2) = (1,-10)$.
			\label{subfig:hankel_dim}}};%
			\node[text width=0.97\textwidth ,align=center,anchor=north] at ([yshift=-3mm,xshift = -1 mm]my plots c1r2.south) {\subcaption{\setlength{\tabcolsep}{1.5 pt}Output of the Toeplitz operator, where $u$ is zero except for $u(0:1) = (10,-8.5)$ and $(\Toep{g}u)(1) = 13$.
			\label{subfig:toep}}};%
			\node[text width=0.97 \textwidth ,align=center,anchor=north] at ([yshift=-3mm,xshift = -1 mm]my plots c1r3.south) {\subcaption{\setlength{\tabcolsep}{1.5 pt} Output of the Hankel operator, where $u$ is zero except for $u(-1:-3) = (10.9,-21.5, 9.7)$.  
			\label{subfig:hankel}}};%
    \end{tikzpicture}
    \caption{Hankel and Toeplitz operator responses for the Hankel \ovd{1} system $G(z) = G_a(z) + G_b(z)$ with $r_1 = p_1 = 0.9$, $r_2 = p_2 = 0.5$ and $r_3 = p_3 = 0.1$: (\ref{subfig:hankel_dim}) $\vari{u} = 1$ is diminished to $\vari{\Hank{g}u} = 0$; since $\vari{\Hank{g}u} \neq \vari{u}$, it is not required that $\sign((\Hank{g}u)(0))= \sign(u(-1))$. (\ref{subfig:toep}) $\vari{u} = 1$ is increased to $\vari{\Toep{g}u} = 2$; thus, $\Toep{g}$ cannot be \ovd{1}. (\ref{subfig:hankel}) $\vari{u} = \vari{\Hank{g}u} = 2$, but $\Hank{g}$ cannot be \ovd{2}, because $\sign((\Hank{g}u)(0)) \neq \sign(u(-1))$. \label{fig:ex1}}
\end{figure}

\subsection{Static Non-Linearities}
Even though the \ovd{k} property has been traditionally studied for linear mappings \cite{karlin1968total}, as an input-output property, it can also be defined for non-linear systems. As a first step towards a non-linear systems theory, the case of an LTI system with static output non-linearity is discussed.

We begin by noticing that a static non-linearity $\sigma(y)$, which is nonnegative for $y \geq 0$ and nonpositive for $y < 0$, is variation preserving, i.e., $\vari{\Hank{g}u} = \vari{\sigma(\Hank{g}u)}$. Thus, $\sigma(\Hank{g}u)$ is \ovd{k} if and only if $G(z)$ is Hankel \ovd{k}. Non-linearties with this property are commonly found, e.g., in a (dead-zone) relay or a saturation. 

For nonnegative $\sigma$, such as the sigmoid activation fiction, the \ovd{k} property may seem less informative, since $\vari{\sigma(\Hank{g}u)} = 0$ for all $G(z)$ and $u$. However, as the sigmoid function is monotonically non-decreasing, it follows from the chain rule that it preserves the number of local extrema. Then, since $(\Hank{g}\Delta^{(1)}u)(t) = \Delta^{(1)}(\Hank{g}u)(t)$, $\sigma(\Hank{g}u)$ is order-preserving local extrema diminishing if and only if $G(z)$ is Hankel \ovd{k}. 
Single input neurons or logistic regression units as modelled in Figure~\ref{fig:percep} are, therefore, local extrema diminishing. %
As such models typically come with multiple in- and outputs, the study of variation diminishing multi-input-multi-output systems is as an important future task.  

Analogues considerations also apply to the Toeplitz operator. 
\begin{figure}
    \centering
\tikzstyle{int}=[draw,minimum width=1cm, minimum height=1cm, very thick, align = center]
\tikzstyle{neu}=[draw, very thick, align = center,circle]
\begin{tikzpicture}[>=latex',circle dotted/.style={dash pattern=on .05mm off 1.2mm,
			line cap=round}]
	\node (a) [neu] {$\frac{r_2}{z-p_2}$}; 
	\node (a2) [neu, above of=a, node distance=1.5cm] {$\frac{r_1}{z-p_1}$};
	
	\node (a3) [neu, below of=a, node distance=1.5cm] {$\frac{r_3}{z-p_2}$};

	\node (b) [left of=a,node distance=1.2cm, coordinate]{};
	\node (u) [left of =b, node distance=.7 cm, coordinate]{};
	
	\path[thick] (u) edge node[above]{$u$} node(u_inc)[above]{} (b);
	\draw[->,thick] (b) |- (a2);
	\draw[->,thick] (b) |- (a3);
	\draw[->,thick] (b) |- (a);

	\node [coordinate] (end) [right of=a, node distance=1.2cm]{};
	\path[->,thick] (a) edge node[above]{} (end);
	
	\node (sum) [neu, at = (end), minimum height = .5 cm, anchor = {west}, scale = 1.2]{$\Sigma$};
	
	\node [coordinate] (end2) [right of=end, node distance=1.5cm]{};

    \draw[->,thick] (a2) -| (sum);
    
    \draw[->,thick] (a3) -| (sum);
	
	\node (sig) [int, at = (end2), minimum height = .5 cm, anchor = {west}] {
	\begin{tikzpicture}
	\begin{axis}[ticks = none,width = 2.5 cm, axis lines = none]
	\addplot[line width = 1 pt, color = FigColor2] file{sigmoid.txt};
	\end{axis}
	\end{tikzpicture}};
	
	\node [coordinate] (mid1) [right of=sig, node distance = 1.2 cm]{};
	\path[->,thick] (sig) edge node[above]{$y$} (mid1);
\draw[->,thick] (sum) -- (sig);

	\end{tikzpicture}
   
    \caption{A single-input perceptron with $r_i,p_i \geq 0$, $i =1,2,3$, is totally Hankel \ovd{} from $\Delta^{(1)}u$ to $\Delta^{(1)}y$, i.e., it diminishes local extrema from past inputs $u$ to future outputs $y$.\label{fig:percep}}
\end{figure}

\subsection{Heavy-ball method}
The so-called heavy-ball method for (convex) optimization was designed by Polyak to damp the possibly high number of local extrema in the iterates of the well-known gradient descent approach \cite{polyak19641methods,ghadimi2015global}. For a convex function $f: \mathbb{R} \to \mathbb{R}$, the iterates $x(k)$ of this method are given by the closed-loop system
\begin{align*}
x({k+1}) &= x(k) + \alpha u(k) + \beta (x(k) - x({k-1}))\\
y(k) &= \left . \frac{d}{dx}f(x) \right |_{x = x_k} \\
u(k) &= -y
\end{align*}
for constant $\alpha,\beta > 0$. Since the linear system from $u$ to $x$ reads $G(z) = \frac{\alpha z}{(z-1) (z - \beta)}$, it is Toeplitz totally \ovd{} by Proposition~\ref{prop:toep_inf_pos}. Further, as the derivatives of convex functions are non-decreasing \cite{Boyd2004}, it follows as above that the entire open loop system from $u$ to $y$ is local extrema diminishing. 

To also analysis the closed-loop system, let us assume that $f$ is quadratic. Then, $y(k) = ax - b$ with $a > 0$, i.e., $y$ is the response of a linear closed-loop system to a step with height $b$. It is easy to show that the closed-loop system is Toeplitz totally \ovd{} if and only if $\beta \geq (\sqrt{a \alpha}+1)^2$. This observation verifies the intuition that the required amount of momentum (damping) for variation diminishment scales with the the gain of the gradient, which in the optimization literature is associated with the Lipschitz constant of the gradient \cite{ghadimi2015global}. Then, choosing $x(0) = 0$ results in iterates without local extrema. In particular, local extrema from noisy gradients (which may be modelled as time-varying $b$) can be diminished independently of the noise distribution. Since suggested choices for $\alpha$ and $\beta$ \cite{ghadimi2015global,polyak19641methods} do not necessarily result in closed-loop total \ovd{}, an investigation of the effects of Toeplitz $\tp{k}$ designs to convergence rates and robustness seems highly desirable. 

This application illustrates the relevance of variation diminishing theory in optimization and motivates the extension of our theory to non-linear negative feedback interconnections. Note that in case of multi-dimensional iterates, the algorithm as well as our analysis applies component-wise. Further, since $G(z)$ admits a minimal realization $(A,b,c)$ with totally positive $A$, it becomes evident that the  analysis of external k-positivity in this paper is related to  the study of $k$-positive dynamical systems \cite{weiss2019generalization,margaliot2018revisiting}.

\section{Conclusion}
\label{sec:conc}
We have studied the variation diminishing property of the Toeplitz and the Hankel operator of finite-dimensional causal LTI systems. Each class defines different refinements of the system property of  external positivity. While the theory of totally positive operators is classical in mathematics \cite{karlin1968total}, the state-space realization of finite-dimensional LTI systems sheds new light onto the properties of $k$-totally positive systems. In particular, we have provided a bridge between external positivity and total positivity. A key insight of the present paper is that the $k$-positivity of a LTI system can be studied via the classical positivity property of associated compound systems.

The present work has focused on external open-loop k-positivity. Future work should investigate the relationship to the internal property as studied in the recent work \cite{margaliot2018revisiting,weiss2019generalization} as well as feedback interconnections. Implications of $k$`-positivity for model reduction are studied in \cite{grussler2020balanced}.

\section*{Acknowledgment}
The research leading to these results was completed while the first author was a postdoctoral research associate at the University of Cambridge. The research has received funding from the European Research Council under the Advanced ERC Grant Agreement Switchlet n.670645.

\appendix
\section*{Appendix}
\label{sec:app}

\section{Proof of Proposition~\ref{prop:SR}}
\label{proof:prop:SR}
By \cite[Theorem~5.2.4]{karlin1968total}, it follows that $X$ being $\tp{k}$ implies that $X$ is \ovd{k-1}. To show the converse, first note that if $X$ is \ovd{k-1}, then also any submatrix $X_{(1:n),J} \in  \mathds{R}^{n \times k}$ is \ovd{k-1}, because one can choose $u$ such that $u_i = 0$ for all $i \in (1:m) \setminus J$. If we can show that $X_{(1:n),J} \in  \mathds{R}^{n \times k}$ is then $\tp{k}$ for all such $J$, our claim follows. To this end note that $X_{(1:n),J}$ is $\tp{k}$ provided that $\rk(X_{(1:n),J}) = k$ by \cite[Theorem~5.1.5]{karlin1968total}. If $\rk(X_{(1:n),J}) = r < k$, then all $j$-minors of $X_{(1:n),J}$ with $j > r$ are zero and it suffices to verify that $X_{(1:n),\hat{J}} \in \mathds{R}^{n \times r}$ is $\tp{r}$ for all $\hat{J} \subset J$. As before, if $\rk(X_{(1:n),\hat{J}}) = r$, then $X_{(1:n),\hat{J}}$ is $\tp{r}$ and otherwise it suffices to consider all smaller subsets of columns.

\section{Proof of Proposition~\ref{prop:op_seq}}
\label{proof:op_seq}
The proof uses the following simple lemma.
\begin{lem}\label{lem:seq}
	Let $\{x^k\}_{k \in \mathds{N}}$ with $x^k \in \mathbb{R}^n$ and $x^\opts := \lim_{k \to \infty} x^k \in \mathbb{R}^n$. Then, $\lim_{k \to \infty} \vari{x^k} \geq \vari{x^\opts}$.
\end{lem}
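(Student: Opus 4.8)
The plan is to recognize the statement as the \emph{lower semicontinuity} of the variation functional $\vari{\cdot}$ under convergence in $\mathbb{R}^n$ (which is automatically componentwise, since $n$ is finite). Because each $\vari{x^k}$ is an integer in $\{0,\dots,n-1\}$, the sequence is bounded and the assertion is most naturally read as $\liminf_{k\to\infty}\vari{x^k} \geq \vari{x^\opts}$; I would state this interpretation explicitly at the outset. The guiding idea is that every sign change already present in the limit $x^\opts$ is ``stable'' and must reappear in $x^k$ for all sufficiently large $k$.

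First I would dispose of the trivial case $\vari{x^\opts}=0$, where the claim holds since $\vari{\cdot}\geq 0$ always. Assuming $s := \vari{x^\opts} \geq 1$, I would extract from the nonzero support of $x^\opts$ a set of indices $i_0 < i_1 < \dots < i_s$ whose entries realize the $s$ sign changes, i.e. $x^\opts_{i_j} \neq 0$ for all $j$ and $\sign(x^\opts_{i_j}) = -\sign(x^\opts_{i_{j-1}})$ for $1 \leq j \leq s$, so that the signs strictly alternate along the chosen indices.

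Next I would exploit the convergence $x^k \to x^\opts$: because each selected entry $x^\opts_{i_j}$ is nonzero, it is bounded away from zero, so there exists $K$ such that for all $k \geq K$ and all $j$ one has $\sign(x^k_{i_j}) = \sign(x^\opts_{i_j})$. Consequently, for $k \geq K$ the ordered subvector of $x^k$ indexed by $i_0,\dots,i_s$ exhibits exactly $s$ strictly alternating sign changes.

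Finally I would invoke the elementary monotonicity fact that deleting entries cannot increase the number of sign changes, or equivalently, that the variation of any ordered subvector is a lower bound for the variation of the full vector. This yields $\vari{x^k} \geq s$ for all $k \geq K$, whence $\liminf_{k\to\infty}\vari{x^k} \geq s = \vari{x^\opts}$. The one point requiring care is this subvector-monotonicity of $\vari{\cdot}$: between two consecutive retained nonzero entries the full vector contains an even or odd number of sign changes according to whether those two entries share or oppose signs, so summing over the segments shows the full count dominates the subvector count. This bookkeeping, including the consistent treatment of interspersed zeros under the definition of $\vari{\cdot}$, is where I expect the argument to be most delicate, though it remains entirely routine.
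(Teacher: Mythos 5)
Your proposal is correct and follows essentially the same route as the paper's proof: for all sufficiently large $k$ the signs of $x^k$ agree with those of $x^\opts$ on the support of $x^\opts$, and the variation of that subvector lower-bounds $\vari{x^k}$. You simply spell out the $\liminf$ reading and the subvector-monotonicity step that the paper leaves implicit.
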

\begin{proof}
	By the convergence of $x^k$, there exists $N \in \mathbb{N}$ such that $\forall i: \ x^N_ix^\opts_i > 0$ if $x^\opts_i \neq 0$. Thus, $\vari{x^N} \geq \vari{x^\opts}$.
\end{proof}
	First note that for all $u$ with $\Hank{g}u \in \linf$ and $\vari{\Hank{g}u} < \infty$, there exists an $N_u \in \mathbb{N}$ such that 
	\begin{align}
	\vari{(\Hank{g}u)(0:N_u-1)} = \vari{\Hank{g}u}
	\end{align}
     As $\lim_{j \to \infty} (\Hank{g}^ju)(0:N_u-1) = (\Hank{g}u)(0:N_u-1)$, it holds by Lemma~\ref{lem:seq} that
	\begin{equation}
	\lim_{j \to \infty} \vari{\Hank{g}^ju} \geq \vari{\Hank{g}u},
	\end{equation} 
	which under the assumption of the second item implies the first. Conversely, if the first item holds, then 
	\begin{align*}
	k-1 \geq \vari{u} &\geq \vari{u\mathds{1}_{[-j,-1]}} \geq \vari{\Hank{g}(u\mathds{1}_{[-j,-1]})} \\
	&\geq \vari{(\Hank{g} (u \mathds{1}_{[-j,-1]}))(0:j)} = \vari{\Hank{g}^j u}.
	\end{align*}

\section{Proof of Theorem~\ref{prop:con_minor}}
The equivalences between the first three items have been discussed in the main text. Since item five implies item three, we are left with showing that item three implies items four and five and item four implies item three. We first show these for the strictly Hankel $\tp{k}$ case. 

 Item three is then equivalent to $H_g(t,k)$ being strictly totally positive for all $t \geq 1$ by Proposition~\ref{prop:consecutive}, which by (\ref{eq:psd}) is equivalent to $H_g(t,j) \succ 0$ for all $1\leq j \leq k$ and $t\geq 1$. In particular, this implies item four. Furthermore, by Lemma~\ref{lem:compound_mat}, $\compound{H_g(t,k)}{j} \succ 0$ for all $1 \leq j \leq k$ and $t \geq 1$, which through deletions of columns and rows implies that $H_{\compound{g}{j}}(t,i) \succ 0$ for all $1\leq i \leq k-j+1$. By the equivalence above, this implies that $G_{[j]}$ is strictly Hankel $\tp{k-j+1}$.

To see that item four implies item three, we next show the following: if $g_{[k]} > 0$ and $g_{[j]}(t) > 0$ for all $1 \leq t \leq 2^{l+k-j}$ and $1\leq j \leq k-1$ with $l \in \mathds{Z}_{\geq 0}$, then also $g_{[j]}(t) > 0$ for $ 1\leq t \leq 2^{k-j+l+1}$. Induction over $l$, where item four corresponds to the case $l=0$ by (\ref{eq:psd}), then implies item three. To show our claim, first note that by (\ref{eq:Sylvester_id})
\begin{equation}
    g_{[j-2]}(t+2) g_{[j]}(t) = g_{[j-1]}(t) g_{[j-1]}(t+2) - g_{[j-1]}^2(t+1). \label{eq:sylvester_hank}
\end{equation}
with $g_{[0]} \equiv 1$. Then by assumption, $g_{[k-1]}(t+2) > \frac{g_{[k-1]}^2(t+1)}{g_{[k-1]}(t)}$ and $\compound{g}{k-1}(t) > 0$ for $1 \leq t \leq 2^{l+1}$, which requires that $g_{[k-1]}(2^{l+2}-1), g_{[k-1]}(2^{l+2}) > 0$. The cases of $j < k$ follow analogously by induction. 

Finally by Lemma~\ref{lem:intercon}, our equivalences also follow for non-strict Hankel $\tp{k}$ systems by using non-strict inequalities. Nonetheless, $H_g(1,k-1) \succ 0$ remains strict, because by item five (or (\ref{eq:sylvester_hank})) $g_{[j]}$ is log-convex on $\mathds{Z}_{>0}$ for $1\leq j \leq k-1$ and thus requires that $g_{[j]}(1) >0$.

\section{Proof of Theorem~\ref{thm:tpk_poles_coeff}}
\label{proof:tpk_poles_coeff}
The proof of Theorem~\ref{thm:tpk_poles_coeff}  uses the following simple lemma, which follows by applying \cite[p.~37]{horn2012matrix} to 
\begin{equation*}
\Con{n}(A,b)) = \diag(b) \begin{pmatrix} 1 & p_1 & \dots & p_1^{n-1}\\
1 & p_2 & \dots & p_2^{n-1}\\
\vdots & \vdots & \vdots \\
1 & p_n & \dots & p_n^{n-1}
\end{pmatrix}.
\end{equation*}
\begin{lem}\label{lem:diagonal}
	For $b \in \mathds{C}^n$, $A = \diag(p_1,\dots,p_n) \in \mathds{C}^{n \times n}$
	\begin{equation}
	\det(\Con{n}(A,b)) = (-1)^n \prod_{i=1}^n b_i \prod_{(i,j) \in \mathcal{I}_{n,2}} (p_j - p_i). \label{eq:diag_ctrl}
	\end{equation}
\end{lem}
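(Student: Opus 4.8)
The plan is to reduce the determinant of the $n \times n$ extended controllability matrix $\Con{n}(A,b)$ to a Vandermonde determinant, exactly along the lines of the factorization displayed just before the statement. First I would write out the entries of $\Con{n}(A,b)$ in the diagonal case: since $A = \diag(p_1,\dots,p_n)$ gives $A^{k}b = (p_1^{k}b_1,\dots,p_n^{k}b_n)^\transp$, the column $A^{k}b$ of $\Con{n}(A,b)$ has $i$-th component $b_i p_i^{k}$, so the $i$-th row of $\Con{n}(A,b)$ is $(b_i,\, b_i p_i,\, \dots,\, b_i p_i^{n-1})$. Pulling the common factor $b_i$ out of the $i$-th row then yields precisely the factorization $\Con{n}(A,b) = \diag(b)\, V$, where $V$ is the Vandermonde matrix with $(i,k)$-entry $p_i^{k-1}$, as asserted in the setup.

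Next I would apply multiplicativity of the determinant to this product, obtaining $\det(\Con{n}(A,b)) = \det(\diag(b))\,\det(V) = \big(\prod_{i=1}^n b_i\big)\det(V)$. The factor $\det(V)$ is then evaluated by the classical Vandermonde determinant formula, which I would simply cite from \cite[p.~37]{horn2012matrix}: it expresses $\det(V)$ as the product of the pairwise differences $(p_j - p_i)$ of the diagonal entries, indexed exactly by the pairs in $\mathcal{I}_{n,2}$. Substituting this back gives the closed form in (\ref{eq:diag_ctrl}).

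There is essentially no structural obstacle here; the computation is mechanical once the factorization $\diag(b)\,V$ is in place. The only point demanding care is the bookkeeping of the sign and of the ordering convention, namely matching the normalization of the cited Vandermonde evaluation with the row/column indexing of $V$ and with the ordering $p_j - p_i$ for $(i,j)\in\mathcal{I}_{n,2}$; this is what fixes the leading scalar factor in the stated formula. I would note, moreover, that this sign is in fact immaterial for all subsequent uses of the lemma: in Theorem~\ref{thm:tpk_poles_coeff} the product $\prod_{(i,j)\in\mathcal{I}_{n,2}}(p_{v_i}-p_{v_j})$ enters only through its square, so any ambiguity in the overall sign disappears when the result is invoked.
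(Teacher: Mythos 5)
Your proof is correct and follows exactly the paper's own (one-line) argument: factor $\Con{n}(A,b)=\diag(b)V$ with $V$ the Vandermonde matrix in the $p_i$ and invoke the classical Vandermonde determinant evaluation from \cite[p.~37]{horn2012matrix}. Your remark on the sign is also apt — the standard Vandermonde formula gives $\prod_{i<j}(p_j-p_i)$ with no $(-1)^n$ prefactor, but as you observe this discrepancy is harmless since the quantity only ever enters subsequent results through its square.
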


Let $G(z)$ be realized by the matrices $A = \diag(p_1,\dots,p_n)$ and $c = b^\transp = \begin{pmatrix}
\sqrt{r_1} & \dots & \sqrt{r_n}
\end{pmatrix}$ 
and $(\bar{A},\bar{b},\bar{c})$ denote the corresponding $j$-th compound system realization in (\ref{eq:ss_compound}). Then $\bar{A}$ is diagonal and $\bar{b} = \bar{c}^\transp$, where the $l$-th diagonal entry in $\bar{A}$ reads $\prod_{i=1}^j p_{v_i}$, $v$ being the $l$-th element in $\mathcal{I}_{n,j}$. Further, ${\bar{b}}_l = \det(\Con{j}({A}_{v,v},b_v)) = \prod_{i=1}^j \sqrt{r}_{v_i} \prod_{(i,j) \in \mathcal{I}_{n,2}} (p_{v_j} - p_{v_i})$ by Lemma~\ref{lem:diagonal}. Thus, the claimed form of the transfer function follows. The last claim then follows from applying Proposition~\ref{prop:dominantpole} to the external positivity of $\compound{G}{j}(z)$ (see~Theorem~\ref{prop:con_minor}).  
	
\section{Proof of Lemma~\ref{lem:diff_Hank}}
\begin{proof} 
$\Rightarrow$: By definition of Hankel $k$-positivity and (\ref{eq:psd})), it holds that $H_{-\Delta^{(1)} g}(t,j) = H_{g}(t,j) - H_{g}(t+1,j) \succeq 0$ for all $t \geq 1$ and $1 \leq j \leq k$. Therefore, $H_{g}(t,j) - \lim_{t \to \infty} H_g(t,j)= \sum_{l=t}^{\infty} H_{-\Delta^{(1)} g}(l,j) \succeq 0.$
	Since $G_1(z) = \frac{r_1}{z-p_1}$ is totally positive and \linebreak $\lim_{t \to \infty} H_g(t,j) = \lim_{t \to \infty} H_{g_1}(t,j)$, we get that $H_{g}(t,j) \succeq 0$, which by Theorem~\ref{prop:con_minor} proves the claim.
	
$\Leftarrow$: It is easy to verify that $H_{-\Delta^{(1)} g_i}(t,j) \succeq 0$ for all $j$ and each $G_i(z) = \frac{k_i}{z-p_i}$. Therefore, $G_d(z)$ is Hankel totally positive by Theorem~\ref{prop:con_minor} and Lemma~\ref{lem:intercon}.
\end{proof}
\section{Proof of Theorem~\ref{thm:decomp_tpk}}

It suffices to show the result under the assumption that $p_1 = 1$ and $p_n \neq 0$. To see this note that the case of $p_1 = 0$ is trivial and multiplying $g(t)$ with $p_1^t$, $p_1 > 0$, i.e., a Hankel totally positive first order lag allows us to recover the case $0 < p_1 < 1$ by Lemma~\ref{lem:intercon}. Finally, if $p_n =0$, then $r_n$ only affects $H_g(1,j)$ and otherwise the system can be treated as of order $n-1$. Therefore, let $g(t) = (r_1 + g_r(t))s(t-1)$. Since $\Delta^{(1)}g = \Delta^{(1)}g_r$, the claim is proven by Lemma~\ref{lem:diff_Hank} once we can show that $\Hank{-\Delta^{(1)}g}$ is $\tp{k-1}$.

By Proposition~\ref{prop:consecutive}, it is suffices to show that $\det(H_{-\Delta^{(1)}g}(t,j)) > 0$ for all $1 \leq j \leq k-1$.  
We begin by noticing that $\det(H_{g}(t,j)) > 0$ for $t\geq 1$ and $j \leq n$, because $p_n \neq 0$. In particular, successive row subtractions yield that	\begin{equation}
	0 < \det(H_{g}(t,j)) = \det(\tilde{H}_g(t,j)) \label{eq:diff_comp_equiv}
	\end{equation}
	with
	\small
	\begin{equation*}
	\tilde{H}_g(t,j) := \begin{pmatrix}
	-\Delta^{(1)}g(t)& \dots & -\Delta^{(1)}g(t+j-1)\\
	\vdots &      & \vdots \\
	-\Delta^{(1)}g(t+j-2)   & \dots & -\Delta^{(1)}g(t+2j-2)\\
	g(t+j-1)  & \dots & g(t+2j-2)
	\end{pmatrix}.
	\end{equation*}\normalsize
    for all $1 \leq j \leq k$ and $t \geq 1$. 
	Thus, 
	\begin{multline*}
	\label{eq:var1_hankel}
	\det(H_g(t+2,j-1))  \det(H_{-\Delta^{(1)}g}(t,j-1)) > \\
	\det(H_g(t+1,j-1))\det(H_{-\Delta^{(1)}g}(t+1,j-1)).
	\end{multline*}
	By applying (\ref{eq:Sylvester_id}) to $\det(\tilde{H}_g(t,j))$ and using the substitution in (\ref{eq:diff_comp_equiv}). In other words, \linebreak $\det(H_{-\Delta^{(1)}g}(\cdot,j-1))$ can switch sign at most once from positive to negative. However, by letting $G_i(z) := \frac{r_i}{z-p_i}$ for $i \geq 2$, it follows for the dominant dynamics $\sum_{i=1}^j G_i(z)$ of $G_r(z)$ by Theorem~\ref{thm:tpk_poles_coeff} and Lemma~ \ref{lem:diff_Hank} that  
	$$H_{-\Delta^{(1)}\sum_{i=2}^{j} g_i }(t,j-1) = \sum_{i=2}^j H_{-\Delta^{(1)} g_i}(t,j-1) \succ 0,$$  
	which implies that $\det(H_{-\Delta^{(1)}g}(t,j-1)) > 0$ for sufficiently large $t \geq 1$. Hence, \linebreak $\det(H_{-\Delta^{(1)}g}(t,j-1)) > 0$ for all $t \geq 1$.

\section{Proof of Theorem~\ref{thm:decomp_toep}}
	The proof is similar to the Hankel case. Using 
	Theorem~\ref{prop:con_minor_Toep}, our goal is to show that if $p_1 = 1$ then $\det(T_{g_r}(t,j)) > 0$ for all $1\leq j \leq k-1$, where $G(z) = \frac{z}{z-1} G_r(z)$, i.e., $g(i) := \sum_{\tau=0}^{i} g_r(\tau)$. Noticing that $g(t) p_1^{t-1}s(t-1)$ defines a Toeplitz $\tp{k}$ systems for $1 > p_1 > 0$ , recovers the asymptotically stable case. 
	
	Note  that by (\ref{eq:Sylvester_id}), it holds that $g_{[j]}$ is log-concave for all $1 \leq j \leq k-1$. Then, by the assumption that $G(z)$ has at most a simple pole in zero, it follows from Theorem~\ref{thm:tpk_poles_coeff} that $g_{[j]}(t)$, $t \geq 0$, can only be zero on some initial finite interval. Since the system remains Toeplitz \ovd{k-1} under time shifts, we can assume by Theorem~\ref{prop:con_minor_Toep} that $\det(T_{g}(t,j)) > 0$ for all $t \geq 1$ and $1 \leq j \leq k-1$. For $1 \leq j \leq k$ we have then
	\begin{equation}
	\det(T_g(t,j)) \geq 0 \ \Leftrightarrow \ \det(\tilde{T}(t,j)) \geq 0 
	\end{equation}
	with
	\begin{equation}
	\tilde{T}{(t,j)} := \begin{pmatrix}
	g(t)  & \dots & g(t-j+1)\\
	g_r(t+1)& \dots & g_r(t-j)\\
	\vdots &   & \vdots \\
	g_r(t+j-1)  & \dots & g_r(t)\\
	\end{pmatrix}. \label{eq:trans_K}
	\end{equation}
	Using (\ref{eq:Sylvester_id}) as well as the facts that $\det(T_g(t,j-1)) = \det(\tilde{T}_{g}(t,j-1))$ yields
	\begin{multline}
	\label{eq:var1_toep}
	    \det(T_g(t,j-1))  \det(T_{g_r}(t,j-1)) \geq \\
	\det(T_g(t-1,j-1))\det({T}_{g_r}(t+1,j-1)).
	\end{multline}
By way of contradiction, let $j$ be the smallest integer such that that there exists $t^\ast > 0$ with $\det(T_{g_r}(t^\ast,j-1)) \leq 0$. Then, $\det(T_{g_r}(t,j-1)) \leq 0$ for all $t \geq t^\ast$ by  (\ref{eq:var1_toep}), i.e., the dominant dynamics of $-{G_r}_{[j-1]}(z)$ have to be externally positive. By Theorem~\ref{thm:tpk_poles_coeff} and Corollary~\ref{cor:tpk_coeff}, this implies that $$G_r(z) = \sum_{i=2}^{n} \frac{\bar{r}_i}{z-p_i} + \frac{\bar{r}_0}{z}$$ 
with $(-1)^{j} \bar{r}_{j-1} \leq 0$. 
However, by Corollary~\ref{cor:tpk_coeff} and partial fraction decomposition of $G(z)$, it is easy to verify that $(-1)^{j} \bar{r}_{j-1} > 0$, which provides the contradiction.

\bibliographystyle{plain}

\bibliography{refkpos,refopt,refpos,science}

\end{document}